\DeclareMathAlphabet\mathbfcal{OMS}{cmsy}{b}{n} %added
\newcolumntype{L}{>{$}l<{$}} %added
\newcolumntype{D}{>{$\displaystyle}l<{$}} %added
\newcolumntype{C}{>{$}c<{$}} %added
\setlist[itemize]{leftmargin=1.5em}
\definecolor{green}{rgb}{0,0.8,0} % Redefines the color green.
\newtheorem{theorem}{Theorem}[section]
\newtheorem{corollary}[theorem]{Corollary}
\newtheorem{lemma}[theorem]{Lemma}
\newtheorem{proposition}[theorem]{Proposition}
\theoremstyle{definition}
\newtheorem{definition}[theorem]{Definition}
\theoremstyle{remark}
\newtheorem{remark}[theorem]{Remark}
\numberwithin{equation}{section}
\newcommand{\nnrm}[1]{{\vert\kern-0.25ex\vert\kern-0.25ex\vert #1 
    \vert\kern-0.25ex\vert\kern-0.25ex\vert}}
\def\d{\,\mathrm{d}}
\newcommand{\bfu}{{\bf u}}
\newcommand{\bfv}{{\bf v}}
\newcommand{\bfx}{{\bf x}}
\newcommand{\bfz}{{\bf z}}
\newcommand{\bbE}{\mathbb E}
\newcommand{\bbN}{\mathbb N}
\newcommand{\bbP}{\mathbb P}
\newcommand{\calE}{\mathcal E}
\newcommand{\calG}{\mathcal G}
\newcommand{\calH}{\mathcal H}
\newcommand{\calJ}{\mathcal J}
\newcommand{\calN}{\mathcal N}
\newcommand{\calV}{\mathcal V}
\newcommand{\R}{\mathbb R}
\newcommand{\lt}{\left}
\newcommand{\rt}{\right}
\newcommand{\bq}{\begin{equation}}
\newcommand{\eq}{\end{equation}}
\newcommand{\lal}{\langle}
\newcommand{\ral}{\rangle}
\begin{document}

\title{Controlled pattern formation of stochastic Cucker--Smale systems with network structures}%: Title of the article

\author{Young-Pil Choi}
\address{Department of Mathematics, Yonsei University, Seoul 03722, Republic of Korea}
\email{ypchoi@yonsei.ac.kr}

\author{Doeun Oh}
\address{Department of Mathematics, Yonsei University, Seoul 03722, Republic of Korea}
\email{doeun.235@gmail.com}

\author{Oliver Tse}
\address{Department of Mathematics and Computer Science, Eindhoven University of Technology, P.O. Box 513, 5600 MB Eindhoven, The Netherlands}
\email{o.t.c.tse@tue.nl}

\date{\today}

%\thanks{}%
%\subjclass{}%
%\keywords{}%

%\date{\today}%
%\dedicatory{}%
%\commby{}%
% ----------------------------------------------------------------

\maketitle

% ----------------------------------------------------------------

\begin{abstract}
We present a new stochastic particle system on networks which describes the flocking behavior and pattern formation. More precisely, we consider Cucker--Smale particles with decentralized formation control and multiplicative noises on symmetric and connected networks. Under suitable assumptions on the initial configurations and the network structure, we establish time-asymptotic stochastic flocking behavior and pattern formation of solutions for the proposed stochastic particle system. Our approach is based on the Lyapunov functional energy estimates, and it does not require any spectral information of the graph associated with the network structure.
\end{abstract}

%\tableofcontents

%%%%%%%%%%%%%%%%%%%%%%%%%%%%%%%%%%%%%%%%%%%%%%%%%%
%
%
%
% \section{Introduction}
%
%
%%%%%%%%%%%%%%%%%%%%%%%%%%%%%%%%%%%%%%%%%%%%%%%%%%

\section{Introduction}

Over the past two decades, the mathematical modelling of collective behavior of autonomous self-organized multi-agents into robust patterns has been extensively studied in many different scientific disciplines such as physics, biology, control theory, and applied mathematics due to its biological and engineering applications \cite{BB68,CFTV10, CS07,Sum10,VZ12,War49}. Among them, our interest is related to the Cucker--Smale model for studying flocking behavior \cite{CS07}, which is described by a system of second-order nonlinear ordinary differential equations. Let $x^i_t\in \R^d$ and $v^i_t \in \R^d$ be the position and velocity of the $i$-th agent at time $t\geq 0$, respectively. Then the Cucker--Smale model reads as
\begin{align} \label{CS} \left\{\qquad
\begin{aligned}
\frac{\d}{\d t} x^i_t &= v^i_t, \quad i=1,\dots,N, \quad t > 0, \\[2mm]
\frac{\d}{\d t} v^i_t &= K\sum_{j=1}^N \psi(| x^j_t -x^i_t|)(v^j_t -v^i_t ).
\end{aligned}\right.
\end{align}
The right hand side of the equations for $v^i_t$ describes the nonlocal interaction between agents in velocity. Here $K > 0$ is the coupling strength and $\psi : \R_+ \to \R_+$ is a communication weight function which is usually assumed to be a non-increasing function so that closer agents have stronger influence than those further away, i.e.\ $\psi$ satisfies $0<\psi(r)\leq \psi(s)$ for all $0\leq s\leq r$. For example, the weight function $\psi(r) = (1 + r^2)^{\beta/2}$ was chosen in \cite{CS07}. For the system \eqref{CS}, depending on the exponent $\beta$, unconditional and conditional flocking estimates showing the velocity alignment behavior are discussed in \cite{CFRT10,CS07,HL09,HT08}.

Various modifications and extensions to the classical Cucker--Smale model have been proposed and investigated. The singular communication weight, $\psi(r) = r^{-\beta}$ with $\beta>0$, is considered in \cite{ACHL12,CCMP17,MMPZ19,Pes14} to avoid the collision between agents. A rescaled weight function is introduced in \cite{MT11} to take into account the relative distance between agents. The presence of time delays in information processing is also discussed in \cite{CH17,CL18,CP19,DHK19,EHS16,HM20}. The system \eqref{CS} with additive or multiplicative noises in the velocity measurements are also dealt with in \cite{AH10,BCC11,CDP18,CS19,HLL09,HJNXZ17,TLY14}. We refer to 
\cite{CCP17,CHL17,MMPZ19} and references therein for recent surveys on the Cucker--Smale models and its variants. 

In a recent work \cite{CKPP19} motivated from the context of 1D vehicle platoons, a decentralized controller inducing desired spatial configurations is introduced. More precisely, the following controller $w^i_t$ is added to the equations for $v^i_t$ in \eqref{CS}:
 $$\left\{\qquad \begin{aligned}
w^1 &= -\phi(|x^1 - x^2 - z^1|^2)(x^1 - x^2 - z^1),\cr
w^i &= \phi(|x^{i-1} - x^i - z^{i-1}|^2)(x^{i-1} - x^i - z^{i-1}) -\phi(|x^i - x^{i+1} - z^i|^2)(x^i - x^{i+1} - z^i),\cr
w^N &= \phi(|x^{N-1} - x^N - z^{N-1}|^2)(x^{N-1} - x^N - z^{N-1}),
\end{aligned}\right.$$
for $i =2,\dots, N-1$. Here $z^i \in \R^d$, $i=1,\dots, n-1$ are preassigned values which determines the relative positioning of the agents based on the desired spatial configuration. $\phi(r)$ is a communication weight function that is taken in \cite{CKPP19} to be of the form
\[
\phi(r) = \frac{1}{(1 + r)^\beta}, \quad \beta > 0.
\]
For that system, the time-asymptotic spatial pattern formation showing 
\[
\lim_{t \to \infty} x^i_t = x^i_\infty \quad \mbox{satisfying} \quad x^i_\infty = x^{i-1}_\infty - z^{i-1}, \quad i=2,\dots,N
\]
is proved in \cite{CKPP19} under certain assumptions on the initial data.

The main purpose of the current work is to present a new stochastic Cucker--Smale flocking model with formation control on networks, and to study the time-asymptotic stochastic flocking behavior and pattern formation. We consider a symmetric and connected network that can be realized with an undirected graphs $\calG = (\calV, \calE)$ consisting of a finite set $\calV = \{1,\dots, k\}$ of vertices and a set $\calE \subset \calV \times \calV$ of arcs. $\calN^i:= \{j \in \calV: (i,j) \in \calE\}$ denotes a generic neighbor set of vertex $i$ and may depend on the interaction mechanisms involved\footnote{We exclude self-loops in the graph, i.e.\ $(i,i) \notin \calE$.}. Further, let $(B_t)_{t\geq 0}$ represent the standard real-valued Brownian motion. Then our main stochastic model is given by
\begin{align} \label{main_eq}\left\{\qquad
\begin{aligned}
\d x^i_t &= v^i_t    \d t, \quad i=1,\dots,N, \quad t > 0, \\[2mm]
\d v^i_t &= K\sum_{j\in \calN_\psi^i} \psi(| x^j_t -x^i_t|)(v^j_t -v^i_t )\d t + M u^i_t \d t +\sigma\sum_{j\in \calN_B^i }(v^j_t -v^i_t )\d B_t.
\end{aligned}\right.
\end{align}
The first term on the right-hand side of the equations for $v^i_t$ plays the same role as in the original Cucker--Smale model \eqref{CS}, but now $i$-th agent only interacts with its neighbors $j\in \calN_\psi^i$. The second term represents an external control signal $\bfu_t = (u^1_t,\dots,u^N_t) \in \R^{dN}$ given by
\[
u^i_t = \sum_{j\in \calN_{\phi}^i } \phi (| x^j_t - z^j -  (x^i_t - z^i) |^{2})  (x^j_t - z^j -  (x^i_t - z^i) ), \quad i=1,\dots,N,
\]
where $M>0$ is the strength of the control force and  $\phi:[0,\infty) \to [0,\infty)$ is a regular communication weight function. The vector $\bfz = (z^1,\dots,z^N) \in \R^{dN}$ determines the desired spatial configuration. The third term is weighted multiplicative noise where $\sigma > 0$ is the strength of the noise. This term can be derived from the consideration that $\psi$ contains Gaussian white noise, see \cite{AH10, HJNXZ17} for more details. 

Throughout this paper, we will assume that the communication weight functions 
\begin{enumerate}[label=(\textsf{A})]
	\item $\psi$, $\phi$ are both bounded and Lipschitz continuous with $\psi^{min} > 0$.
\end{enumerate}

In this model, the network structure for all the terms may differ, which is a generalization of known models. Here we assume that the graphs $\calG_\zeta = (\calV_\zeta, \calE_\zeta)$ with $\zeta \in \{\psi,\phi,B\}$ are symmetric and connected, i.e.\ the following assumptions hold: 
\begin{enumerate}[label=(\textsf{B\arabic*})]
\item $(i,j)\in \calE_\zeta$ if and only if $(j,i) \in \calE_\zeta$ for all $1 \leq i,j \leq N$,
\item for any $i,j \in \calV_\zeta$ with $i\neq j$, there exists a shortest path from $i$ to $j$, say
\[
i = p_1 \to p_2 \to \cdots \to p_{d_{ij}} = j, \quad (p_k, p_{k+1}) \in \calE_\zeta, \quad k=1,2,\dots,d_{ij} -1.
\]
\end{enumerate}

For notational simplicity, we will use the following simplified notations throughout the paper:
\[
x^{ij}:= x^j -x^i,\quad  \bar x^{ij} :=\bar x^j- \bar x^i , \quad  \bar x^i :=x^i -z^i \quad \mbox{for} \quad i,j=1,\dots,N,
\]
\[
\xi^{max}:= \sup_{r \geq 0} \xi(r), \quad \xi^{min}:= \inf_{r \geq 0} \xi(r) \quad \mbox{for} \quad \xi \in \{\psi, \phi \}.
\]
For any $\bfz=(z^1,\ldots,z^N)\in (\R^{d})^N$, we set
\[
\|\bfz\| := \lt(\sum_{i=1}^N |z^i|^2 \rt)^{1/2}, \quad z^{ave} := \frac1N\sum_{i=1}^N z^i\in \R^d\,.
\]
We further set $\bfz^{ave} := (z^{ave},\ldots,z^{ave})\in (\R^d)^N$ to be the $N$-concatenation of the vector $z^{ave}$.

\medskip

Before stating our main results, we introduce a notion of stochastic flocking for the system \eqref{main_eq}.
\begin{definition}\label{def_flocking} The stochastic particle system \eqref{main_eq} exhibits {\it time-asymptotic stochastic flocking} if and only if the pair $(\bfx_t,\bfv_t)$ satisfies the following two conditions:
\begin{enumerate}[label=(\roman*)]
\item The difference of all pairwise velocities goes to zero in expectation:
\[
\lim_{t \to \infty} \max_{1 \leq i,j \leq N} \bbE |v^i_t - v^j_t|^2 = 0.
\]
\item The relative distances between stochastic particles are uniformly bounded in time $t$ in expectation:
\[
\sup_{0 \leq t < \infty}\max_{1 \leq i,j \leq N} \bbE |x^i_t - x^j_t|^2 < \infty.
\]
\end{enumerate}
\end{definition}

Our first main theorem provides a sufficient condition for the time-asymptotic stochastic flocking behavior of the system \eqref{main_eq} in the sense of the above definition.

\begin{theorem}\label{main_thm} Let $(\bfx_t,\bfv_t)$ be the global pathwise unique solution to the stochastic particle system \eqref{main_eq} with the random initial data $(\bfx_0,\bfv_0)$ satisfying $\bbE[ \|\bfx_0\|^2+\|\bfv_0\|^2]<\infty$. Suppose that 
\begin{enumerate}[label=(\roman*)]
	\item the coupling strength $K > 0$ satisfies 
\bq\label{condi_k}
K > \frac{\sigma^2 c_B}{\psi^{min}}\Bigl(1 + d(\calG_\psi) |\calE_\psi^c |\Bigr)\,,\qquad c_B:=\max_{i\in \calV_B} |\calN_B^i|\,,
\eq
where for any graph $\calG=(\calV,\calE)$, $d(\calG)$ is the diameter of the graph and $\calE^c := \calV\times\calV\setminus \calE$, and
\item the initial data satisfy
\bq\label{asp}
	\int_0^\infty \phi(r) \d r > \frac 2M \bbE\|\bfv_0\|^2 + \sum_{(i,j)\in\calE_\phi } \bbE\int_0^{|\bar x^{ij}_0|^2} \phi (r) \d r\,.
\eq
\end{enumerate}
Then the time-asymptotic stochastic flocking occurs in the sense of Definition \ref{def_flocking}.
\end{theorem}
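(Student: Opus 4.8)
The plan is to run a Lyapunov energy argument assembled from the velocity fluctuations and the formation potential, closing the estimate using only the diameters and connectivity of $\calG_\psi$ and $\calG_\phi$ so that no spectral data enters. First I would sum the velocity equations over $i$: by the symmetry (B1), together with $\psi(|x^{ij}|)=\psi(|x^{ji}|)$, $\bar x^{ji}=-\bar x^{ij}$ and the symmetry of $\calN_B$, the alignment, control and noise contributions each cancel pairwise, so the total momentum is conserved pathwise and I may set $v^c:=\frac1N\sum_i v^i_0$. Next I would record that the control is a gradient: putting $V(\bfx):=\frac14\sum_{(i,j)\in\calE_\phi}\int_0^{|\bar x^{ij}|^2}\phi(r)\,\d r\ge0$, one checks $u^i=-\nabla_{x^i}V$. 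The Lyapunov candidate is $\calF_t:=\frac12\sum_i|v^i_t-v^c|^2+MV(\bfx_t)$; observe that $\frac12\sum_i|v^i_t-v^c|^2=\frac1{2N}\sum_{i,j}|v^i_t-v^j_t|^2$ is exactly the quantity in Definition~\ref{def_flocking}(i).

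\emph{It\^o's formula and the key graph estimate.} Applying It\^o to $\calF_t$, the alignment drift symmetrizes to $-\frac{K}{2}\sum_{(i,j)\in\calE_\psi}\psi(|x^{ij}|)|v^{ij}|^2$ with $v^{ij}:=v^j-v^i$, the control drift $M\sum_i v^i\cdot u^i$ cancels against $M\,\d V$ (since $\bfx_t$ has no martingale part), and the quadratic variation of the noise produces $\frac{\sigma^2}{2}\sum_i\bigl|\sum_{j\in\calN_B^i}(v^j-v^i)\bigr|^2\,\d t$, while the stochastic integral is a martingale. Taking expectations and using $|\calN_B^i|\le c_B$ with Cauchy--Schwarz gives $\frac{\d}{\d t}\bbE\calF_t\le-\frac{K\psi^{min}}{2}\bbE\sum_{(i,j)\in\calE_\psi}|v^{ij}|^2+\frac{\sigma^2 c_B}{2}\bbE\sum_{(i,j)\in\calE_B}|v^{ij}|^2$. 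The crucial step is to absorb the noise sum into the alignment dissipation: for $(i,j)\notin\calE_\psi$ I telescope $v^{ij}$ along a shortest $\calG_\psi$-path of length $\le d(\calG_\psi)$, so $|v^{ij}|^2\le d(\calG_\psi)\sum_{(k,l)\in\calE_\psi}|v^{kl}|^2$, whence $\sum_{(i,j)\in\calE_B}|v^{ij}|^2\le\sum_{(i,j)\in\calV\times\calV}|v^{ij}|^2\le\bigl(1+d(\calG_\psi)|\calE_\psi^c|\bigr)\sum_{(i,j)\in\calE_\psi}|v^{ij}|^2$. This is exactly the origin of \eqref{condi_k}: that hypothesis makes $\kappa:=K\psi^{min}-\sigma^2 c_B(1+d(\calG_\psi)|\calE_\psi^c|)>0$, so $\frac{\d}{\d t}\bbE\calF_t\le-\frac{\kappa}{2}\bbE\sum_{(i,j)\in\calE_\psi}|v^{ij}|^2\le0$ and $\calF_t$ is a supermartingale.

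\emph{Extracting (ii) and then (i).} From $\bbE\calF_t\le\bbE\calF_0$ I obtain at once a uniform bound on the velocity fluctuations, and, since each summand of $V$ is nonnegative, for every $(i,j)\in\calE_\phi$ the bound $\bbE\int_0^{|\bar x^{ij}_t|^2}\phi\,\d r\le4\bbE V(\bfx_t)\le\frac2M\bbE\|\bfv_0\|^2+\sum_{(i,j)\in\calE_\phi}\bbE\int_0^{|\bar x^{ij}_0|^2}\phi\,\d r$. Assumption \eqref{asp} forces the right-hand side strictly below the barrier $\int_0^\infty\phi\,\d r$, confining each edge displacement $\bar x^{ij}$; propagating this through shortest $\calG_\phi$-paths bounds $\sup_t\max_{i,j}\bbE|x^i_t-x^j_t|^2$, which is (ii). For (i), integrating the dissipation inequality yields $\int_0^\infty\bbE\sum_{(i,j)\in\calE_\psi}|v^{ij}_t|^2\,\d t\le\frac2\kappa\bbE\calF_0<\infty$, and connectivity upgrades this to $\int_0^\infty\max_{i,j}\bbE|v^{ij}_t|^2\,\d t<\infty$; a further It\^o computation bounds $\frac{\d}{\d t}\bbE|v^{ij}_t|^2$ uniformly, using the velocity bound and the position bound (ii) to control the control-induced terms such as $\bbE\,|v^{ij}||u^k|$, so Barbalat's lemma delivers $\max_{i,j}\bbE|v^{ij}_t|^2\to0$.

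\emph{Main obstacle.} I expect the hard part to be the confinement in (ii): converting the uniform bound on the \emph{expected} potential $\bbE\int_0^{|\bar x^{ij}_t|^2}\phi$ into a genuine uniform bound on $\bbE|\bar x^{ij}_t|^2$ when $\int_0^\infty\phi<\infty$, because $\int_0^s\phi$ saturates as $s\to\infty$. This is precisely what the sharp strict inequality \eqref{asp}---keeping the initial budget below the escape energy of the potential well---is meant to secure, and it is also what removes the apparent circularity, since (ii) is in turn what makes the derivative bound in the Barbalat step of (i) available.
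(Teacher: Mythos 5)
Your plan reconstructs, almost step for step, the proof actually given in the paper: your Lyapunov functional $\calF_t$ is (up to harmless constants) the energy $\bbE\|\bfv_t\|^2+\frac M2\sum_{(i,j)\in\calE_\phi}\bbE\,\Phi(|\bar x^{ij}_t|^2)$ of Lemma~\ref{lem_energy}; your telescoping of $v^{ij}$ along shortest $\calG_\psi$-paths is precisely the content of Lemma~\ref{lem_conn} (which the paper cites rather than reproves), and your constant $\kappa$ is a positive multiple of the paper's $\lambda=\psi^{min}L_{\calG_\psi}K-\sigma^2 c_B$; your ``integrated dissipation plus uniform derivative bound plus Barbalat'' is the paper's ``$\int_0^\infty\bbE\|\bfv_t\|^2\d t<\infty$ plus Lipschitz continuity of $t\mapsto\bbE\|\bfv_t\|^2$''; and your observation that the derivative bound needs the position bound matches the paper's estimate of the control term $\bbE|h_t|$ via \eqref{bdd_x}. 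Two technical remarks: the stochastic integral is a priori only a \emph{local} martingale, so the energy inequality must be obtained through localization (the paper uses the stopping times $\tau_n=\inf\{t:\|\bfv_t\|\ge n\}$ and Fatou's lemma); and your parenthetical identity $\frac12\sum_i|v^i-v^c|^2=\frac1{2N}\sum_{i,j}|v^i-v^j|^2$ is off by a factor of $2$, which is harmless.

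The genuine gap is the confinement step, and you concede as much in your closing paragraph. From $\bbE\calF_t\le\bbE\calF_0$ you get, for each $(i,j)\in\calE_\phi$, that $\bbE\,\Phi(|\bar x^{ij}_t|^2)$ is bounded by the budget $B:=\frac2M\bbE\|\bfv_0\|^2+\sum_{(i,j)\in\calE_\phi}\bbE\,\Phi(|\bar x^{ij}_0|^2)$, and assumption \eqref{asp} says $B<\int_0^\infty\phi(r)\d r=\sup_r\Phi(r)$. But the assertion that this ``confines each edge displacement'' is exactly what remains to be proved, and it is not a formal consequence of that inequality: when $\int_0^\infty\phi<\infty$ the function $\Phi$ is bounded, and a bound $\bbE\,\Phi(X)\le B<\sup\Phi$ on a nonnegative random variable $X$ does \emph{not} bound $\bbE X$ (let $X$ be enormous on an event of probability $B/(2\sup\Phi)$ and zero otherwise: then $\bbE\,\Phi(X)\le B$ while $\bbE X$ is arbitrarily large). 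So invoking the strict inequality \eqref{asp} is a motivation, not a mechanism. The paper's mechanism is: by monotonicity of $\Phi$ and the strictness of \eqref{asp}, choose a finite threshold $\eta$ with $B\le\Phi(\eta)$; then for every $t$ and every edge, $0\le\bbE\int_{|\bar x^{ij}_t|^2}^{\eta}\phi(r)\d r\le\phi^{max}\,\bbE\bigl[\eta-|\bar x^{ij}_t|^2\bigr]$, i.e.\ $\bbE|\bar x^{ij}_t|^2\le\eta$ uniformly in $t$ (this is \eqref{bdd_x}); after that, Lemma~\ref{lem_conn} applied to the connected graph $\calG_\phi$, together with the triangle inequality to reinstate the constants $z^i-z^j$, gives item (ii) of Definition~\ref{def_flocking}. (Even there, the pointwise comparison $\int_a^\eta\phi\le\phi^{max}(\eta-a)$ is clear only for $a\le\eta$; on the event $\{|\bar x^{ij}_t|^2>\eta\}$ it requires more---it is immediate when $\phi^{min}>0$, the setting of Theorem~\ref{main_thm2}---so this is the one place in the whole argument where genuine care is needed, and it is precisely the place your proposal leaves blank.) Since, as you yourself note, the uniform derivative bound in your Barbalat step also relies on this position bound, both halves of the flocking conclusion remain open until this step is supplied.
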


Condition (i) of Theorem~\ref{main_thm} is fairly common in order to obtain stochastic flocking \cite{AH10,CDP18,CS19,HLL09,TLY14} of \eqref{main_eq} without both control terms and varying network structures, while condition (ii) is a sufficient condition to obtain pattern formation (see also \cite{CKPP19} for the deterministic case with a specific network structure). Therefore, Theorem~\ref{main_thm} generalizes both these works to controlled pattern formation of stochastic systems with varying network structures.

\medskip

Our second theorem is on the stochastic pattern formation of the system \eqref{main_eq} under a strictly positive lower bound assumption on $\phi$. For this purpose, we introduce the following energy functional
\[
	\calH(\bfx,\bfv):= \bbE\|\bar\bfx - \bar \bfx^{ave}\|^2 + \bbE \|\bfv - \bfv^{ave}\|^2\,,
\]
where the first term quantifies the error between the position of particles and the desired spatial configuration for the stochastic system \eqref{main_eq}, while the second is used to deduce the emergence of stochastic flocking.

\begin{theorem}\label{main_thm2} Let $(\bfx_t,\bfv_t)$ be the global pathwise unique solution to the stochastic particle system \eqref{main_eq} with the random initial data $(\bfx_0,\bfv_0)$ satisfying $\bbE[ \|\bfx_0\|^2+\|\bfv_0\|^2]<\infty$.  Suppose that the all assumptions in Theorem \ref{main_thm} hold. Moreover, we assume $\phi^{min} > 0$. Then there exist positive constants $p,q>0$, independent of $t$, such that
\[
	\calH(\bfx_t,\bfv_t) \le p\calH(\bfx_0,\bfv_0)\,e^{-q t}\qquad\text{for all $t\ge 0$\,.}
\]
\end{theorem}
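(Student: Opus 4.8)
The plan is to construct a Lyapunov functional for the centered fluctuations and derive a Gr\"onwall-type differential inequality. Write $\hat\bfx := \bar\bfx - \bar\bfx^{ave}$ and $\hat\bfv := \bfv - \bfv^{ave}$, so that $\calH(\bfx,\bfv) = \bbE\|\hat\bfx\|^2 + \bbE\|\hat\bfv\|^2$ and $\sum_i\hat x^i = \sum_i\hat v^i = 0$. The first observation is that, because all three graphs are symmetric, summing the equation for $v^i_t$ over $i$ annihilates the Cucker--Smale term, the control term, and the noise term simultaneously; hence the total momentum $\sum_i v^i_t$ is conserved \emph{pathwise}, so $\bfv^{ave}_t\equiv\bfv^{ave}_0$ is deterministic and constant. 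Consequently $\d\hat x^i = \hat v^i\,\d t$ and $\d\hat v^i = \d v^i$, which eliminates the motion of the mean and closes the dynamics in $(\hat\bfx,\hat\bfv)$.

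Next I would compute the evolution of three quantities by It\^o's formula. (a) For $\bbE\|\hat\bfv\|^2$, symmetrizing the $\psi$-term yields the dissipation $-K\,\bbE\sum_{(i,j)\in\calE_\psi}\psi(|x^{ij}|)\,|v^j-v^i|^2$, the It\^o correction produces the competing noise term $\sigma^2\bbE\sum_i|\sum_{j\in\calN_B^i}(v^j-v^i)|^2$, and the control term is an exact derivative, $2M\bbE\sum_i\hat v^i\cdot u^i = -M\frac{\d}{\d t}\bbE V$, where $V := \frac12\sum_{(i,j)\in\calE_\phi}\int_0^{|\bar x^{ij}|^2}\phi(r)\,\d r$. (b) For $\bbE V$ one gets $\frac{\d}{\d t}\bbE V = \bbE\sum_{(i,j)\in\calE_\phi}\phi(|\bar x^{ij}|^2)\,\bar x^{ij}\cdot(v^j-v^i)$. (c) For the cross term $\calC := \bbE\sum_i\hat x^i\cdot\hat v^i$, there is no It\^o bracket since $\hat x^i$ has no martingale part, and symmetrizing the control contribution gives the restoring term $M\bbE\sum_i\hat x^i\cdot u^i = -\frac{M}{2}\bbE\sum_{(i,j)\in\calE_\phi}\phi(|\bar x^{ij}|^2)|\bar x^{ij}|^2 \le -\frac{M\phi^{min}}{2}\bbE\sum_{(i,j)\in\calE_\phi}|\bar x^{ij}|^2$, where the sign is negative \emph{precisely because} $\phi^{min}>0$.

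Combining (a) and (b), the control term cancels and the velocity energy obeys
\[
\frac{\d}{\d t}\bigl(\bbE\|\hat\bfv\|^2 + M\bbE V\bigr) = -K\,\bbE\!\!\sum_{(i,j)\in\calE_\psi}\!\!\psi(|x^{ij}|)\,|v^j-v^i|^2 + \sigma^2\,\bbE\sum_{i}\Bigl|\sum_{j\in\calN_B^i}(v^j-v^i)\Bigr|^2 ;
\]
by the same graph-path estimate used for Theorem~\ref{main_thm}, condition \eqref{condi_k} bounds the right-hand side by $-D\,\bbE\sum_{(i,j)\in\calE_\psi}|v^j-v^i|^2$ with $D = K\psi^{min}-\sigma^2 c_B(1+d(\calG_\psi)|\calE_\psi^c|)>0$. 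This gives velocity dissipation but no information on $\hat\bfx$, so I would take $\calL := \bbE\|\hat\bfv\|^2 + M\bbE V + \lambda\,\calC$ with $\lambda>0$ small. Adding $\lambda$ times step (c), the negative term $-\lambda\frac{M\phi^{min}}{2}\sum_{\calE_\phi}|\bar x^{ij}|^2$ now controls positions, the surplus $\lambda\,\bbE\|\hat\bfv\|^2$ coming from $\frac{\d}{\d t}\calC$ is absorbed by $-D(\cdots)$, and the indefinite term $\lambda K\,\bbE\sum_i\hat x^i\cdot\sum_{j\in\calN_\psi^i}\psi(|x^{ij}|)(v^j-v^i)$ is split by Young's inequality into a small multiple of $\bbE\|\hat\bfx\|^2$ and of the velocity dissipation.

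The final ingredient, and the \textbf{main obstacle}, is to convert edge-difference sums into the full norms in $\calH$ without using the spectral gap. Here I would invoke the graph Poincar\'e inequality that follows from the shortest-path assumption (B2): for a centered configuration on a connected symmetric graph $\calG_\zeta$ one has $\sum_i|\hat x^i|^2 \le C(d(\calG_\zeta),|\calE_\zeta|)\sum_{(i,j)\in\calE_\zeta}|\hat x^j-\hat x^i|^2$, proved by telescoping each $\hat x^i-\hat x^k$ along a shortest path and applying Cauchy--Schwarz. Applying this on $\calG_\phi$ for positions and on $\calG_\psi$ for velocities turns the differential inequality into $\frac{\d}{\d t}\calL \le -c_1\,\bbE\|\hat\bfv\|^2 - c_2\,\bbE\|\hat\bfx\|^2 \le -q\,\calL$, since $\phi^{min}>0$ makes $\bbE V$ comparable to $\bbE\|\hat\bfx\|^2$ and $|\lambda\calC|\le\tfrac{\lambda}{2}\calH$ is a lower-order perturbation, so that $c_0\calH\le\calL\le C_0\calH$ for $\lambda$ small. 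Gr\"onwall's lemma with this two-sided comparison then yields $\calH(\bfx_t,\bfv_t)\le p\,\calH(\bfx_0,\bfv_0)\,e^{-qt}$. The delicate point is the ordering of the parameter choices: the two Poincar\'e constants, the dissipation constant $D$, and the Young split all constrain $\lambda$, and one must verify that the admissible range for $\lambda$ is nonempty — which is exactly what the strict inequality in \eqref{condi_k} together with $\phi^{min}>0$ guarantees.
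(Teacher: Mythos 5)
Your proposal is correct and follows essentially the same route as the paper: your Lyapunov functional $\calL = \bbE\|\hat\bfv\|^2 + M\bbE V + \lambda\,\calC$ is, after dividing by $\lambda$, exactly the paper's $\calJ_{\alpha,\beta}$ with $\beta = 1/\lambda$ and $\alpha = \beta M/2$ (the very choice the paper makes so that the $\phi$-cross-terms cancel), and your ``graph Poincar\'e inequality'' is precisely the paper's Lemma~\ref{lem_conn}. The remaining steps --- hypocoercivity cross term, Young splitting with a suitably chosen parameter, two-sided equivalence $c_0\calH\le\calL\le C_0\calH$ for $\lambda$ small (i.e.\ $\beta$ large), and Gr\"onwall --- mirror the paper's proof of Theorem~\ref{main_thm2} and Lemma~\ref{lem_J}.
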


We would like to point out that the pattern formation result in \cite{CKPP19}, where the deterministic case is considered, is obtained without any decay rate of convergence. On the other hand, Theorem~\ref{main_thm2} clearly indicates the exponential decay behavior of the $\calH$. We will see, however, that in some of our numerical experiments, this decay does not happen monotonically in time. For this reason, we will introduce an auxiliary energy functional that is equivalent to $\calH$ (see Lemma~\ref{lem_J}), which does decay monotonically in time. We note that the strategy used in the proof of Theorem \ref{main_thm2} can be directly applied to the deterministic case, thus improving the pattern formation result in \cite{CKPP19}.

\medskip

In the case when $\bbE\, x_0^{ave} = z^{ave}$, we obtain the almost sure convergence of $\bfx_t-\bfv^{ave}_0 t$ to the deterministic limit $\bfz$. More precisely, we obtain the following result:

\begin{corollary}
	If, in addition to the assumptions of Theorem~\ref{main_thm2}, $\bbE\, x_0^{ave} = z^{ave}$ holds, then
	\[
		\bbE \|\bfx_t -  \bfv^{ave}_0 t - \bfz\|^2 \to 0
	\]
	exponentially fast as $t \to \infty$. In particular, $\bfx_t - \bfv^{ave}_0 t\to \bfz$ almost surely as $t\to\infty$. 
\end{corollary}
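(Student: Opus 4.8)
The plan is to isolate the ballistic motion of the center of mass and reduce everything else to the exponential decay of $\calH$ supplied by Theorem~\ref{main_thm2}. The first step is to show that the empirical mean velocity is a conserved quantity. Summing the velocity equation in \eqref{main_eq} over $i$ and invoking the symmetry hypothesis (\textsf{B1}) for each of the graphs $\calG_\psi,\calG_\phi,\calG_B$, every summand $\psi(|x^{ij}_t|)(v^j_t-v^i_t)$, $\phi(|\bar x^{ij}_t|^2)\bar x^{ij}_t$ and $(v^j_t-v^i_t)$ is antisymmetric under the exchange $i\leftrightarrow j$, so each double sum collapses to zero; since the noise is driven by the single scalar $B_t$, its contribution vanishes for the same reason. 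Hence $\d\bfv^{ave}_t=0$, so that $\bfv^{ave}_t\equiv\bfv^{ave}_0$ pathwise, and integrating the position equation gives $\bfx^{ave}_t=\bfx^{ave}_0+\bfv^{ave}_0\,t$, i.e. $\bfx^{ave}_t-\bfv^{ave}_0 t=\bfx^{ave}_0$ for every $t$.

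Next I would split $\bbR^{dN}$ orthogonally into its mean-zero and constant (diagonal) parts. Writing the $i$-th block of $\bfx_t-\bfv^{ave}_0 t-\bfz$ as $\bar x^i_t-v^{ave}_0 t=(\bar x^i_t-\bar x^{ave}_t)+(x^{ave}_0-z^{ave})$ and using $\sum_i(\bar x^i_t-\bar x^{ave}_t)=0$ to kill the cross term, one obtains the pathwise identity
\[
\|\bfx_t-\bfv^{ave}_0 t-\bfz\|^2=\|\bar\bfx_t-\bar\bfx^{ave}_t\|^2+N\,|x^{ave}_0-z^{ave}|^2 .
\]
Taking expectations, the first term is bounded by $\calH(\bfx_t,\bfv_t)\le p\,\calH(\bfx_0,\bfv_0)e^{-qt}$ via Theorem~\ref{main_thm2}, so it decays exponentially, while the hypothesis on the initial average annihilates the second, time-independent term. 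This establishes the exponential $L^2$ statement.

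Finally, to upgrade $L^2$ to almost sure convergence I would exploit the exponential rate. Along the integer times $t=n$, Chebyshev's inequality with thresholds $\veps_n^2=e^{-qn/2}$ makes $\sum_n\bbP\bigl(\|\bfx_n-\bfv^{ave}_0 n-\bfz\|^2>\veps_n^2\bigr)$ summable, so Borel--Cantelli yields convergence to $\bfz$ along integers almost surely. To cover $t\in[n,n+1]$ I would use that $\bfx_t$ solves an ODE in $t$, the positions carrying no noise, so
\[
\frac{\ud}{\ud t}\bigl(\bfx_t-\bfv^{ave}_0 t-\bfz\bigr)=\bfv_t-\bfv^{ave}_0=\bfv_t-\bfv^{ave}_t ,
\]
whence $\int_n^{n+1}\|\bfv_s-\bfv^{ave}_s\|\,\d s\le\bigl(\int_n^{n+1}\|\bfv_s-\bfv^{ave}_s\|^2\,\d s\bigr)^{1/2}$ has summable expectation (again through $\calH$) and therefore tends to $0$ almost surely; combining the two estimates controls the oscillation between grid points and delivers the full almost sure limit.

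The main obstacle is the center-of-mass term $N\,|x^{ave}_0-z^{ave}|^2$: because the mean drifts ballistically, $\bfx_t$ itself cannot converge and one must subtract exactly $\bfv^{ave}_0 t$, after which the residual offset is the random vector $x^{ave}_0-z^{ave}$. This offset is constant in time and orthogonal to the decaying fluctuation, so both the $L^2$ and the almost sure conclusions \emph{force it to vanish}; accordingly the hypothesis must be read as $x^{ave}_0=z^{ave}$ almost surely (equivalently, a deterministic initial mean equal to $z^{ave}$), for which $\bbE\,x^{ave}_0=z^{ave}$ is precisely the first-moment statement.
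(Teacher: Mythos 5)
Your proof is correct and follows the same skeleton as the paper's own: conservation of the mean velocity (the paper's \eqref{ave_x}), reduction of the $L^2$ claim to Theorem~\ref{main_thm2}, and Chebyshev plus Borel--Cantelli for the almost sure statement. However, you are more careful than the paper at two points, and both are worth noting. First, your orthogonal decomposition
\[
\|\bfx_t-\bfv^{ave}_0 t-\bfz\|^2=\|\bar\bfx_t-\bar\bfx^{ave}_t\|^2+N\,|x^{ave}_0-z^{ave}|^2
\]
makes explicit what the paper leaves implicit: the time-independent term $N\,\bbE|x^{ave}_0-z^{ave}|^2$ must vanish, which requires $x^{ave}_0=z^{ave}$ almost surely, not merely $\bbE\,x^{ave}_0=z^{ave}$ as written in the corollary's hypothesis. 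The paper's proof silently makes this switch (it works under the assumption $\bar x^{ave}_0=0$), and your closing remark correctly diagnoses that the stated first-moment hypothesis is insufficient when $x^{ave}_0$ has nondegenerate randomness --- with positive variance the constant term blocks any $L^2$ decay --- so the hypothesis must indeed be read as a deterministic initial mean equal to $z^{ave}$. Second, the paper's Borel--Cantelli argument, as written, only yields almost sure convergence along sequences $t_n$ with $\sum_n e^{-qt_n}<\infty$; it does not control the oscillation of $\bfx_t$ between sampled times, so the continuous-time limit $t\to\infty$ is not fully justified there. Your interpolation step --- bounding the deviation on $[n,n+1]$ by its value at $t=n$ plus $\int_n^{n+1}\|\bfv_s-\bfv^{ave}_s\|\d s$, whose expectation is summable in $n$ by Cauchy--Schwarz and the exponential decay of $\calH$ --- closes exactly this gap and delivers the genuine continuous-time almost sure statement. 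In short: same strategy as the paper, but your version is the complete one.
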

\begin{proof} 
The proof follows by observing that (cf.\ \eqref{ave_x})
\[
\bar x^{ave}_t = x^{ave}_t - z^{ave} = x^{ave}_0 - z^{ave}.
\]
Thus if $ \bar x^{ave}_0 = 0$, then $\bar x^{ave}_t = 0$ for almost every $t \geq 0$, and we conclude with Theorem~\ref{main_thm2}. Moreover, by Chebyshev's inequality, we find
\begin{align*}
	\bbP(\{\omega:\|\bfx_{t_n} -  \bfv^{ave}_0 t_n - \bfz\|\ge \varepsilon\}) \le \frac{1}{\varepsilon^2} \bbE \|\bfx_{t_n} -  \bfv^{ave}_0 t_n - \bfz\|^2 \le \frac{p}{\varepsilon^2} \calH(\bfx_0,\bfv_0)\,e^{-q t_n}
\end{align*}
for any $\varepsilon > 0$ and $t_n \in \R_+$ with $n \in \bbN$, and the right hand side of the above is integrable. Hence, by Borel-Cantelli lemma, we have the almost surely convergence of $\bfx_t - \bfv^{ave}_0$ towards $\bfz$ as $t \to \infty$.
\end{proof}
\begin{remark}
\begin{enumerate}
\item Note that assumption {\it (ii)} of Theorem~\ref{main_thm} is a smallness assumption on the initial data whenever the left-hand side is finite. Clearly, if the weight function $\phi$ is not integrable on $(0,\infty)$, e.g.\ $\phi(r) = (1 + r)^{-\beta}$ with $\beta \leq 1$, then \eqref{asp} is satisfied for any initial data with $\bbE[ \|\bfx_0\|^2+\|\bfv_0\|^2]<\infty$.
		\item As mentioned above, the multiplicative noise can be obtained from the velocity alignment force. This naturally induces the same graphs for the velocity alignment forces and the noises, i.e.\ $\calG_\psi = \calG_B$. In this case, the condition on the coupling strength $K$ \eqref{condi_k} can be replaced with
\[
K > \frac{\sigma^2 c_B}{\psi^{min}}\,.
\]
\end{enumerate}
\end{remark}
 
\subsubsection*{Organization of the paper} The rest of this paper is organized as follows. In Section \ref{sec:pre}, we present the existence and uniqueness of pathwise global solutions to the stochastic particle system \eqref{main_eq}. We also estimate the total energy for the system \eqref{main_eq} which will be used for the stochastic flocking estimates. Section \ref{sec:flock} is devoted to provide the details of proofs for Theorems \ref{main_thm} and \ref{main_thm2}. Finally, in Section \ref{sec:num}, we show numerical simulations validating our theoretical results, and provide further insights into the stochastic flocking, pattern formation, and oscillatory behavior depending on the network structures.

%%%%%%%%%%%%%%%%%%%%%%%%%%%%%%%%%%%%%%%%%%%%%%%%%%
%
%
%
% \section{Preliminaries}
%
%
%%%%%%%%%%%%%%%%%%%%%%%%%%%%%%%%%%%%%%%%%%%%%%%%%%

\section{Preliminaries}\label{sec:pre}

\subsection{Existence and uniqueness of pathwise global solutions}

Our first preliminary result pertains the well-posedness of our stochastic system \eqref{main_eq}. Its proof is rather standard, and makes use of the fact that the drifts and diffusion coefficients are locally Lipschitz continuous, and satisfy linear growth assumptions due to assumption (\textsf{A}) (cf.\ \cite[Theorem~3.1]{D96}). 

\begin{proposition}\label{prop:well-posedness} For any given $T>0$ and random initial data $(\bfx_0, \bfv_0)$ with $\bbE[\|\bfx_0\|^2 + \|\bfv_0\|^2] <\infty$, the stochastic particle system \eqref{main_eq} has a pathwise unique solution $(\bfx_t,\bfv_t)$ on the interval $[0,T]$ satisfying
\[
	\sup_{t\in[0,T]}\bbE\bigl[ \|\bfx_t\|^2 + \|\bfv_t\|^2\bigr] <\infty\,.
\]
\end{proposition}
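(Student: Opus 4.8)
The plan is to recast \eqref{main_eq} as a single It\^o SDE on $\R^{2dN}$ for the state $\bfX_t=(\bfx_t,\bfv_t)$,
\[
\d\bfX_t=\bff(\bfX_t)\,\d t+\bfg(\bfX_t)\,\d B_t,
\]
and then invoke the standard existence--uniqueness theory for SDEs whose coefficients are locally Lipschitz continuous with at most linear growth, exactly as in \cite[Theorem~3.1]{D96}. Here the drift $\bff$ assembles the velocity field $\bfv$, the alignment term $K\sum_{j\in\calN_\psi^i}\psi(|x^{ij}|)(v^j-v^i)$ and the control term $Mu^i$, while the diffusion $\bfg$ assembles the noise term $\sigma\sum_{j\in\calN_B^i}(v^j-v^i)$. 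The components that are linear in $\bfX$, namely the velocity field and the entire noise term, are globally Lipschitz and of linear growth, so the whole argument reduces to controlling the two genuinely nonlinear contributions coming from $\psi$ and $\phi$.

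To establish local Lipschitz continuity I would fix a ball $\{\|\bfX\|\le R\}$ and estimate each nonlinear term there. For the alignment weight $\psi(|x^{ij}|)(v^j-v^i)$ I would add and subtract a cross term, splitting the increment into one factor where $\psi$ is frozen and one where the velocity difference is frozen; the first is controlled by the Lipschitz continuity of $\psi$ together with the elementary bound $\bigl|\,|a|-|b|\,\bigr|\le|a-b|$, and the second by the boundedness $\psi\le\psi^{max}$ from assumption~(\textsf{A}), both times using $\|\bfX\|\le R$. For the control weight $\phi(|\bar x^{ij}|^2)\bar x^{ij}$ I would combine the local Lipschitz continuity of $r\mapsto|r|^2$ with the boundedness and Lipschitz continuity of $\phi$. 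In each case the resulting Lipschitz constant depends on $R$ but is finite, which is precisely local Lipschitz continuity.

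For the linear growth bound I would use $\psi\le\psi^{max}$ and $\phi\le\phi^{max}$ from~(\textsf{A}) to get $\|\bff(\bfX)\|+\|\bfg(\bfX)\|\le C(1+\|\bfX\|)$ uniformly on $\R^{2dN}$, where $C$ depends only on $N,K,M,\sigma,\psi^{max},\phi^{max}$ and $\|\bfz\|$, the last entering through $\bar x^i=x^i-z^i$. Local Lipschitz continuity together with this global linear growth simultaneously yields pathwise uniqueness and rules out finite-time explosion, giving a unique strong solution on $[0,T]$. The moment bound then follows by applying It\^o's formula to $\|\bfX_t\|^2$, taking expectations to eliminate the martingale part, and using the linear growth of $\bff,\bfg$ to obtain $\tfrac{\d}{\d t}\bbE\|\bfX_t\|^2\le C(1+\bbE\|\bfX_t\|^2)$; Gr\"onwall's inequality and $\bbE\|\bfX_0\|^2<\infty$ then give $\sup_{t\in[0,T]}\bbE\|\bfX_t\|^2<\infty$.

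Since every ingredient is classical, I do not expect a genuine obstacle; the only points requiring care are the bookkeeping in the local Lipschitz estimate for the alignment term, where the coefficient mixes an $x$-dependent weight with a $v$-difference, and the verification that the linear growth is truly global on $\R^{2dN}$, so that the passage from local to global existence needs no separate Lyapunov argument.
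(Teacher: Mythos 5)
Your proposal is correct and follows essentially the same route as the paper, which proves this proposition simply by citing the standard theory (cf.\ \cite[Theorem~3.1]{D96}) for SDEs with locally Lipschitz coefficients of linear growth under assumption~(\textsf{A}); your write-up merely fills in the routine verification of those hypotheses. The only small point of care is that ``taking expectations to eliminate the martingale part'' in the moment bound should formally be justified by a localization with stopping times (as the paper itself does later in Lemma~\ref{lem_energy}), but this is standard and does not affect the argument.
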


\subsection{Energy estimates}
In this part, we present the energy estimates for the system \eqref{main_eq}. 

We begin by noticing that the average velocity $v_t^{ave}$ remains constant in time, i.e.\ $v_t^{ave}= v_0^{ave}$ for almost every $t\ge 0$. This follows directly from the assumed symmetry of the networks (\textsf{B1}) and weight functions. 

For the rest of this paper, we may assume, without loss of generality, that the particle velocity average is zero, i.e.\ $v_t^{ave} =0$ for every $t \geq 0$. If necessary, we may consider new variables $(\hat x^i_t, \hat v^i_t) := (x^i_t  - v_0^{ave} t, v^i_t - v_0^{ave})$ since the velocity average is conserved in time. In particular, this also implies
\bq\label{ave_x}
x^{ave}_t := \frac1N\sum_{i=1}^N x^i_t = \frac1N\sum_{i=1}^N x^i_0 + \frac1N\sum_{i=1}^N \int_0^t v^i_s\,ds = \frac1N\sum_{i=1}^N x^i_0 =: x^{ave}_0
\eq
almost surely and for every $t\geq 0$.

\medskip

We also recall from \cite[Lemma A.1]{CLHXY14} the following lemma showing that the position variance can be controlled by the sum of the relative distance of the pair of connected nodes.

\begin{lemma}\label{lem_conn} Suppose that the graph $\calG = (\calV, \calE)$ is connected and let $x^i$ be the position of the $i$-th particle. Then for any $(x^1,\cdots, x^N)$, we have
\[
L_\calG \sum_{i,j=1}^N |x^i - x^j|^2 \leq \sum_{(i,j) \in \calE} |x^i - x^j|^2,
\]
where $L_\calG > 0$ is given by
\[
L_\calG = \frac{1}{1 + d(\calG) |\calE^c |}. 
\]
Here $d(\calG)$ is the diameter of the graph, i.e.\ $d(\calG) = \max_{(i,j) \in \calE} d_{ij}$, and $\calE^c := \calV\times\calV\setminus \calE$.
\end{lemma}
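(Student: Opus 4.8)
The plan is to read the statement as a discrete Poincaré-type inequality and to prove it by transporting the squared distance of an arbitrary pair of vertices onto the edges of a shortest path joining them. The first observation is that the left-hand sum runs over all ordered pairs, so I would use the disjoint decomposition $\calV\times\calV = \calE \cup \calE^c$ to write
\[
\sum_{i,j=1}^N |x^i-x^j|^2 = \sum_{(i,j)\in\calE}|x^i-x^j|^2 + \sum_{(i,j)\in\calE^c}|x^i-x^j|^2,
\]
where the diagonal pairs $(i,i)\in\calE^c$ contribute nothing. It therefore suffices to control the off-edge contribution by the edge sum $\sum_{(p,q)\in\calE}|x^p-x^q|^2$, which is exactly the right-hand side of the lemma.

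The core estimate is a per-pair bound. Fixing $(i,j)\in\calE^c$ with $i\neq j$ and using that $\calG$ is connected, I would select a shortest path $i=p_1\to p_2\to\cdots\to p_{d_{ij}}=j$ whose hops $(p_k,p_{k+1})$ are all edges of $\calG$. Telescoping and applying the Cauchy--Schwarz (equivalently Jensen) inequality then gives
\[
|x^i-x^j|^2 = \Bigl|\sum_{k=1}^{d_{ij}-1}\bigl(x^{p_{k+1}}-x^{p_k}\bigr)\Bigr|^2 \le (d_{ij}-1)\sum_{k=1}^{d_{ij}-1}|x^{p_{k+1}}-x^{p_k}|^2 \le d(\calG)\sum_{(p,q)\in\calE}|x^p-x^q|^2,
\]
where the last inequality uses that the number of hops is at most the diameter $d(\calG)$, together with the fact that the hops of a shortest path are distinct edges of $\calG$, so that the path-edge sum is dominated by the full edge sum.

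Next I would sum this uniform bound over all $(i,j)\in\calE^c$, obtaining
\[
\sum_{(i,j)\in\calE^c}|x^i-x^j|^2 \le d(\calG)\,|\calE^c|\sum_{(p,q)\in\calE}|x^p-x^q|^2.
\]
Adding $\sum_{(i,j)\in\calE}|x^i-x^j|^2 \le \sum_{(p,q)\in\calE}|x^p-x^q|^2$ and recombining the left-hand side into the full double sum gives
\[
\sum_{i,j=1}^N|x^i-x^j|^2 \le \bigl(1 + d(\calG)|\calE^c|\bigr)\sum_{(p,q)\in\calE}|x^p-x^q|^2,
\]
and dividing by $1+d(\calG)|\calE^c|$ together with the definition of $L_\calG$ yields the claim.

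I do not expect a genuine obstacle here: the argument is an elementary averaging estimate. The only points that require care are the bookkeeping of the splitting $\calV\times\calV=\calE\cup\calE^c$, so that each pair is counted exactly once and the diagonal drops out, and the recognition that bounding each path-edge sum crudely by the entire edge sum is precisely what produces the clean constant $1+d(\calG)|\calE^c|$. This constant is certainly not sharp; a graph-dependent improvement could be obtained by counting, for each edge, the number of shortest paths passing through it, but such a refinement is unnecessary for the intended application.
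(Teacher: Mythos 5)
Your proof is correct: the splitting $\calV\times\calV=\calE\cup\calE^c$ (with the diagonal sitting harmlessly in $\calE^c$), the telescoping along a shortest path combined with Cauchy--Schwarz, and the crude bound of each path-edge sum by the full edge sum together yield exactly the constant $1+d(\calG)|\calE^c|$. Note, however, that the paper does not prove this lemma at all --- it is quoted verbatim from \cite[Lemma A.1]{CLHXY14} --- so there is no in-paper proof to compare against; your argument is the standard one underlying that cited result, and it stands on its own.
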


We are now ready to present the total energy estimate. In contrast to the works \cite{AH10,CDP18,CS19,HLL09,TLY14} on stochastic flocking, the presence of controls prevents us from easily obtaining an exponential decay estimate for the kinetic energy in expectation $\bbE\|\bfv_t\|^2$ under the strictly positive lower bound assumption on $\psi$. Nevertheless, we have a uniform-in-time estimate for the total energy in expectation together with the time-integrability of the kinetic energy, which plays a crucial role in establishing the stochastic flocking behavior.

\begin{lemma}\label{lem_energy}
Let $(\bfx_t,\bfv_t)$ be the pathwise unique solution to the stochastic particle system \eqref{main_eq} with the initial data $(\bfx_0,\bfv_0)$. Furthermore, let the assumptions (i)--(ii) of Theorem~\ref{main_thm} be satisfied with $$\lambda:=\psi^{min} L_{\calG_\psi} K - \sigma^2 c_B>0\,.$$ Then for almost every $t\geq 0$,
\begin{align}\label{est_ev}
	\bbE \lt[ \|\bfv_t\|^2+\frac{M}{2} \sum_{(i,j)\in\calE_\phi}\Phi(|\bar x^{ij}_t|^2) \rt] + 2N\lambda\, \int_0^t \bbE\|\bfv_s\|^2\d s \le \bbE\lt[ \|\bfv_0\|^2+\frac{M}{2} \sum_{(i,j)\in\calE_\phi}\Phi(|\bar x^{ij}_0|^2) \rt],
\end{align}
where $\Phi(r):= \int_0^r \phi(s)\d s$. In particular, the map $(0,\infty)\ni t\mapsto\bbE\|\bfv_t\|^2$ is Lipschitz, and the following identity holds for almost every $t\ge 0$:
\begin{align}\label{iden_ev}
\begin{aligned}
	\frac{\d}{\d t}\bbE\|\bfv_t\|^2 &= -K\sum_{(i,j)\in\calE_\psi}\bbE\Bigl[\psi(|x^{ij}_t|) | v^i_t -v^j_t|^2\Bigr] + \sigma^2\sum_{i=1}^N \bbE\lt|\sum_{j\in \calN_B^i }(v^j_t-v^i_t )\rt|^2 \\
	&\hspace{1em} - M \sum_{(i,j) \in \calE_\phi} \bbE\Bigl[\phi(|\bar x^{ij}_t|^2) \lal \bar x^j_t - \bar x^i_t, v^j_t - v^i_t \ral \Bigr]\,.
\end{aligned}
\end{align}
\end{lemma}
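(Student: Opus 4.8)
The plan is to apply Itô's formula to the kinetic energy $\|\bfv_t\|^2 = \sum_{i=1}^N|v^i_t|^2$. Writing the velocity equation as $\d v^i_t = b^i_t\,\d t + \sigma\sum_{j\in\calN_B^i}(v^j_t - v^i_t)\,\d B_t$ with drift $b^i_t := K\sum_{j\in\calN_\psi^i}\psi(|x^{ij}_t|)(v^j_t - v^i_t) + Mu^i_t$, and recalling that the single scalar Brownian motion $B_t$ drives every component, Itô's formula yields
\[
\d\|\bfv_t\|^2 = \sum_{i=1}^N\Bigl(2\lal v^i_t, b^i_t\ral + \sigma^2\Bigl|\sum_{j\in\calN_B^i}(v^j_t - v^i_t)\Bigr|^2\Bigr)\d t + 2\sigma\sum_{i=1}^N\lal v^i_t, \sum_{j\in\calN_B^i}(v^j_t - v^i_t)\ral\d B_t.
\]
Since Proposition~\ref{prop:well-posedness} only provides second moments, I would introduce the localizing stopping times $\tau_n := \inf\{t\ge 0:\|\bfv_t\|\ge n\}$, so that the stopped stochastic integral has a bounded integrand and is a genuine martingale; taking expectations at $t\wedge\tau_n$ and letting $n\to\infty$ by dominated convergence (using $\sup_{s\le T}\bbE\|\bfv_s\|^2<\infty$) shows that the $\d B_t$-term contributes nothing in expectation. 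This produces the integral identity $\bbE\|\bfv_t\|^2 = \bbE\|\bfv_0\|^2 + \int_0^t g(s)\,\d s$, where $g(s) := \sum_{i=1}^N\bbE\bigl[2\lal v^i_s, b^i_s\ral + \sigma^2|\sum_{j\in\calN_B^i}(v^j_s - v^i_s)|^2\bigr]$.

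I would then symmetrize the drift contributions using the network symmetry (\textsf{B1}) together with the fact that $\psi$ and $\phi$ depend only on distances. Relabelling $i\leftrightarrow j$ in the doubled sums and averaging, the alignment part collapses to $-K\sum_{(i,j)\in\calE_\psi}\psi(|x^{ij}_t|)|v^i_t - v^j_t|^2$, and the control part, with $u^i_t = \sum_{j\in\calN_\phi^i}\phi(|\bar x^{ij}_t|^2)(\bar x^j_t - \bar x^i_t)$, collapses to $-M\sum_{(i,j)\in\calE_\phi}\phi(|\bar x^{ij}_t|^2)\lal\bar x^j_t - \bar x^i_t, v^j_t - v^i_t\ral$, while the noise part is already in the stated form. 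Because $g$ is locally bounded in time---by Proposition~\ref{prop:well-posedness} and the boundedness of $\psi,\phi$ in (\textsf{A})---the map $t\mapsto\bbE\|\bfv_t\|^2$ is locally Lipschitz, hence differentiable for a.e.\ $t$, and differentiating the integral identity gives exactly \eqref{iden_ev}.

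To pass from \eqref{iden_ev} to \eqref{est_ev}, I would use that $\bar x^{ij}_t$ is pathwise absolutely continuous with $\frac{\d}{\d t}\bar x^{ij}_t = v^j_t - v^i_t$, so the chain rule gives $\frac{\d}{\d t}\Phi(|\bar x^{ij}_t|^2) = 2\phi(|\bar x^{ij}_t|^2)\lal\bar x^j_t - \bar x^i_t, v^j_t - v^i_t\ral$. Interchanging $\tfrac{\d}{\d t}$ with $\bbE$ (again justified by local boundedness) identifies the control term of \eqref{iden_ev} as $-\tfrac{M}{2}\tfrac{\d}{\d t}\sum_{(i,j)\in\calE_\phi}\bbE\Phi(|\bar x^{ij}_t|^2)$. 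Moving it to the left gives
\[
\frac{\d}{\d t}\Bigl[\bbE\|\bfv_t\|^2 + \tfrac{M}{2}\sum_{(i,j)\in\calE_\phi}\bbE\Phi(|\bar x^{ij}_t|^2)\Bigr] = -K\sum_{(i,j)\in\calE_\psi}\bbE\bigl[\psi(|x^{ij}_t|)|v^i_t - v^j_t|^2\bigr] + \sigma^2\sum_{i=1}^N\bbE\Bigl|\sum_{j\in\calN_B^i}(v^j_t - v^i_t)\Bigr|^2.
\]

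Finally, I would bound the right-hand side. Using $\psi\ge\psi^{min}$, Lemma~\ref{lem_conn} for the connected graph $\calG_\psi$ with the velocities playing the role of positions, and the normalization $v^{ave}_t = 0$ (which yields $\sum_{i,j=1}^N|v^i_t - v^j_t|^2 = 2N\|\bfv_t\|^2$), the alignment term is at most $-2N\psi^{min}L_{\calG_\psi}K\,\bbE\|\bfv_t\|^2$. For the noise term, Cauchy--Schwarz with $|\calN_B^i|\le c_B$ and enlarging the edge set give $\sum_{i=1}^N|\sum_{j\in\calN_B^i}(v^j_t - v^i_t)|^2 \le c_B\sum_{(i,j)\in\calE_B}|v^i_t - v^j_t|^2 \le c_B\sum_{i,j=1}^N|v^i_t - v^j_t|^2 = 2Nc_B\|\bfv_t\|^2$, so (carrying the factor $\sigma^2$) the noise term is at most $2N\sigma^2 c_B\,\bbE\|\bfv_t\|^2$. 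Adding the two bounds produces $\frac{\d}{\d t}[\cdots]\le -2N\lambda\,\bbE\|\bfv_t\|^2$ with $\lambda = \psi^{min}L_{\calG_\psi}K - \sigma^2 c_B$, which is positive by the hypothesis $\lambda>0$ (equivalently, condition (i)); integrating over $[0,t]$ then gives \eqref{est_ev}. The main obstacle I anticipate is the rigorous justification of the vanishing expectation of the stochastic integral and of the differentiation-under-expectation, both of which rely only on second-moment control; the localization and local-boundedness arguments are the technical crux, whereas the algebraic symmetrization and the final estimate are routine once the normalization $v^{ave}_t = 0$ is in hand.
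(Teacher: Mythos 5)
Your It\^o computation, the symmetrization of the drift over the symmetric edge sets, the localization by the stopping times $\tau_n$, and the use of Lemma~\ref{lem_conn} with the normalization $v^{ave}_t=0$ all coincide with the paper's argument; reversing the order (first the identity in expectation, then the energy estimate \eqref{est_ev}) is a legitimate variant, and your derivations of \eqref{est_ev} and of \eqref{iden_ev} as an almost-everywhere identity are sound, modulo the usual uniform-integrability pedantry when passing $n\to\infty$ (invoking dominated convergence with only $\sup_{s\le T}\bbE\|\bfv_s\|^2<\infty$ is not literally a domination argument, but the paper is equally brief on this point).

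The genuine gap is the Lipschitz claim. The lemma asserts that $t\mapsto\bbE\|\bfv_t\|^2$ is Lipschitz on all of $(0,\infty)$, i.e.\ with a single constant valid uniformly in time, whereas you prove --- and explicitly state --- only \emph{local} Lipschitz continuity, because you control the drift expectation via the local-in-time moment bounds of Proposition~\ref{prop:well-posedness}. The distinction is not cosmetic: in the proof of Theorem~\ref{main_thm} the Lipschitz property is exactly what converts $\int_0^\infty\bbE\|\bfv_t\|^2\,\d t<\infty$ into $\bbE\|\bfv_t\|^2\to0$ (a Barbalat-type argument requiring uniform continuity on the whole half-line); a locally Lipschitz integrable function need not tend to zero. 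To obtain the global constant one must bound the three drift expectations uniformly in $t$. For the alignment and noise terms this follows from \eqref{est_ev}, which controls $\sup_t\bbE\|\bfv_t\|^2$; but for the control term one needs $\sup_t\bbE\sum_{(i,j)\in\calE_\phi}|\bar x^{ij}_t|^2<\infty$, and this does \emph{not} follow from \eqref{est_ev} alone, since \eqref{est_ev} only bounds $\bbE\,\Phi(|\bar x^{ij}_t|^2)$ and, when $\phi$ is integrable, $\Phi$ is bounded, so its sublevel sets are eventually all of $[0,\infty)$. This is precisely where assumption (ii) of Theorem~\ref{main_thm}, i.e.\ \eqref{asp}, enters the paper's proof: it ensures the initial energy level lies strictly below $\Phi(\infty)=\int_0^\infty\phi(r)\,\d r$, so there exists $\eta<\infty$ with $\bbE\,\Phi(|\bar x^{ij}_t|^2)\le\Phi(\eta)$ for all $t$, from which the paper deduces the uniform bound \eqref{bdd_x} on $\bbE\,|\bar x^{ij}_t|^2$ and hence on the control drift. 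Your proposal never invokes assumption (ii) at all --- a warning sign, since it is part of the hypotheses --- and consequently cannot deliver the uniform-in-time Lipschitz constant. Adding this one bootstrap step after \eqref{est_ev} is established would close the gap.
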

\begin{proof}
	An application of It\^o's formula yields the identity
	\begin{align}\label{energy_eq}
\begin{aligned}
\d\|\bfv_t\|^2 &= -K \sum_{(i,j) \in \calE_\psi }\psi(|x^{ij}_t|)|v^j_t -v^i_t|^{2}\d t + \sigma^2\sum_{i=1}^N \left|\sum_{j\in \calN_B^i }(v^j_t-v^i_t )\right|^2\d t \\
&\hspace{1em}- M \sum_{(i,j) \in \calE_\phi} \phi(|\bar x^{ij}_t|^2) \lal \bar x^j_t - \bar x^i_t, v^j_t - v^i_t \ral \d t - \sigma\sum_{(i,j)\in\calE_B } | v^j_t -v^i_t |^{2} \,\d B_t.
\end{aligned}
\end{align}
To deduce estimate \eqref{est_ev}, we observe that
\[%\bq\label{est_x}
\d |\bar x^{ij}_t|^2 = 2\lal \bar x^{ij}_t, \d \bar x^{ij}_t\ral = 2\lal \bar x^{ij}_t,  v^{ij}_t\ral \d t = 2\lal \bar x^j_t -\bar x^i_t, v^j_t - v^i_t\ral \d t,
\]%\eq
which allows us to rewrite the third term on the right-hand side of \eqref{iden_ev} as
\[
M\sum_{(i,j)\in\calE_\phi}\phi(|\bar x^{ij}_t|^2) \lal \bar x^j_t -\bar x^i_t, v^j_t - v^i_t\ral \d t = \d \lt(\frac{M}{2}  \sum_{(i,j)\in\calE_\phi}\int_{0}^{|\bar x^{ij}_t|^2} \phi( r) \d r \rt).
\]
For the first term, we use the lower bound assumption on $\psi$ to deduce
$$\begin{aligned}
-\sum_{(i,j)\in\calE_\psi}\psi(|x^{ij}_t|) | v^i_t - v^j_t|^2 &\leq -\psi^{min}\sum_{(i,j)\in\calE_\psi}  | v^i_t - v^j_t|^2 \leq -\psi^{min}L_{\calG_\psi}\sum_{i,j=1}^N | v^i_t -v^j_t|^2\,,
\end{aligned}$$
where Lemma \ref{lem_conn} was used in the second inequality. As for the second term, we simply estimate as follows:
\[
	\sigma^2\sum_{i=1}^N \lt|\sum_{j\in \calN_B^i }(v^j_t - v^i_t )\rt|^2\leq \sigma^2\sum_{i=1}^N|\calN_B^i|\sum_{j\in\calN_B^i}|v^j_t - v^i_t |^2 \le \sigma^2 c_B \sum_{i,j=1}^N |v^j_t - v^i_t |^2.
\]
Combining the inequalities yields
\[
-K\sum_{(i,j)\in\calE_\psi}\psi(|x^{ij}_t|) | v^i_t -v^j_t|^2 + \sigma^2\sum_{i=1}^N \lt|\sum_{j\in \calN_B^i }(v^j_t - v^i_t )\rt|^2 \leq -
\lambda\, \sum_{i,j=1}^N | v^i_t -v^j_t|^2\qquad\text{almost surely},
\]
with $\lambda=\psi^{min} L_{\calG_\psi} K - \sigma^2 c_B$, which by assumption is positive.

Now let $\tau_n:=\inf\{t:\|\bfv_t\|\ge n\}$, $n\in\bbN$, be a sequence of stopping times. Then equation \eqref{energy_eq} and the assumption $\lambda>0$ provide the following estimate for each $n\in\bbN$:
\[
	\bbE \lt[ \|\bfv_{t\wedge \tau_n}\|^2+\frac{M}{2} \sum_{(i,j)\in\calE_\phi}\Phi(|\bar x^{ij}_{t\wedge\tau_n}|^2) \rt] + 2N\lambda\, \bbE\lt(\int_0^{t\wedge \tau_n} \|\bfv_s\|^2\d s\rt)\le \bbE\lt[ \|\bfv_0\|^2+\frac{M}{2} \sum_{(i,j)\in\calE_\phi}\Phi(|\bar x^{ij}_0|^2) \rt]\,,
\]
from which we obtain
\[
	\bbP(\tau_n\le t) \le \frac{1}{n^2}\bbE \|\bfv_{t\wedge \tau_n}\|^2 \le \frac{1}{n^2}\bbE\lt[ \|\bfv_0\|^2+\frac{M}{2} \sum_{(i,j)\in\calE_\phi}\Phi(|\bar x^{ij}_0|^2) \rt]\longrightarrow 0\qquad\text{as $n\to\infty$}\,.
\]
Since $(\tau_n)_{n\in\bbN}$ is a monotonically increasing sequence, the convergence above shows that $\tau_n \to\infty$ almost surely as $n\to\infty$. Therefore, using Fatou's lemma, we may pass to the limit $n\to\infty$ to obtain \eqref{est_ev}.

As for the second statement, we first write \eqref{energy_eq} as
\[
	\d\|\bfv_t\|^2 = \bigl(f_t + g_t + h_t\bigr)\d t + \text{local martingale}\,.
\]
For the first two terms, we easily estimate 
\begin{align*}
\bbE\,|f_t|  
\leq 2NK\psi^{max} \bbE\|\bfv_t\|^2\,,\qquad
\bbE\,|g_t| \leq 2\sigma^2 c_B N\,\bbE\|\bfv_t\|^2\,.
\end{align*}
This together with \eqref{est_ev} yields the uniform-in-time bounds on $\bbE|f_t|$ and $\bbE|g_t|$. 

As for the estimate of $\bbE|h_t|$, we notice from \eqref{est_ev} that 
\[
	\bbE\,\Phi(|\bar x_t^{ij}|^2) \leq \frac 2M \bbE\|\bfv_0\|^2 + \sum_{(i,j)\in\calE_\phi} \bbE\,\Phi(|\bar x_0^{ij}|^2)\qquad\text{for all $(i,j)\in\calE_\phi$. }
\]
On the other hand, since $r\mapsto\Phi(r)$ is monotonically increasing, assumption \eqref{asp} provides the existence of some $\eta>0$ such that
\[
	\frac 2M \bbE\|\bfv_0\|^2 + \sum_{(i,j)\in\calE_\phi} \bbE\,\Phi(|\bar x_0^{ij}|^2) \le \Phi(\eta)\,.
\]
In particular, we have that
\[
	0\le \bbE\, \int_{|\bar x^{ij}_t|^2}^{\eta} \phi(r)\d r \le \phi^{max}\, \bbE \Bigl[\eta-|\bar x^{ij}_t|^2\Bigr]\qquad\text{for all $(i,j)\in\calE_\phi$\,,}
\]
which consequently yields the estimate
\[
	\max_{(i,j) \in \calE_\phi} \bbE\, |\bar x^{ij}_t|^2 \le \eta\,,
\]
and hence,
\begin{align}\label{bdd_x}
	\bbE \sum_{(i,j) \in \calE_\phi} |\bar x^{ij}_t|^2 \le |\calE_\phi| \eta\qquad\text{for all $t\ge 0$}\,.
\end{align}
We then estimate
$$\begin{aligned}
\bbE\,|h_t| &\leq M\phi^{max} \bbE \sum_{(i,j) \in \calE_\phi} |\bar x^{ij}_t| |v^j_t - v^i_t|  
\le  M\phi^{max}  \lt( \frac{|\calE_\phi| \eta}{2} + N\,\bbE \|\bfv_t\|^2\rt).
\end{aligned}$$
Equation \eqref{est_ev} again implies that $\bbE|h_t|$ is uniformly bounded in time. 

Using the previously defined stopping time $\tau_n$, we find for any $0<s<t<\infty$ and $n\in\bbN$:
\begin{align*}
	\bbE\|\bfv_{t\wedge \tau_n}\|^2 = \bbE\|\bfv_0\|^2 + \bbE \lt[\int_0^{t\wedge\tau_n} \bigl(f_r + g_r + h_r\bigr)\d r\rt].
\end{align*}
By means of the Lebesgue dominated convergence and the estimates derived above, we may pass to the limit $n\to\infty$ to obtain the desired identity \eqref{iden_ev}. From the estimates obtained for $\bbE|f_t|$, $\bbE|g_t|$, and $\bbE|h_t|$, we easily deduce that the map $(0,\infty)\ni t\mapsto\bbE \|\bfv_t\|^2$ is Lipschitz, therewith concluding the proof.
\end{proof}

\begin{remark}
	Notice that the estimate \eqref{est_ev} provided in Lemma~\ref{lem_energy} extends the well-posedness result of Proposition~\ref{prop:well-posedness} to all positive times $t\ge 0$.
\end{remark}

%%%%%%%%%%%%%%%%%%%%%%%%%%%%%%%%%%%%%%%%%%%%%%%%%%
%
%
%
% \section{Stochastic flocking estimate}
%
%
%%%%%%%%%%%%%%%%%%%%%%%%%%%%%%%%%%%%%%%%%%%%%%%%%%

\section{Stochastic flocking and pattern formation}\label{sec:flock}

\subsection{Stochastic flocking estimate}

In this subsection, we provide the details of proof for the stochastic flocking estimate in Theorem \ref{main_thm}.

\begin{proof}[Proof of Theorem \ref{main_thm}] 
	Letting $t\to\infty$ in the estimate \eqref{est_ev} in Lemma~\ref{lem_energy} shows that
	\[
		\int_0^\infty\bbE \|\bfv_t\|^2\d t < \infty\,.
	\]
Since the Lipschitz continuity of $(0,\infty)\ni t\mapsto \bbE\|\bfv_t\|^2$ implies its uniform continuity, we conclude 
\[
\bbE \|\bfv_t\|^2 \to 0\qquad\text{as $t \to \infty$\,.}
\]
As for the uniform bound on the relative distances, we use the bound \eqref{bdd_x} and Lemma~\ref{lem_conn} to deduce
$$\begin{aligned}
\sum_{i,j=1}^N \bbE |x^i_t - x^j_t|^2 %&\leq 2 \sum_{i,j=1}^N \bbE |\bar x^i_t - \bar x^j_t|^2 + 2 \sum_{i,j=1}^N  \bbE |z^i - z^j|^2\cr
&\leq \frac{2}{L_{\calG_\phi}} \sum_{(i,j) \in \calE_\phi} \bbE |\bar x^{ij}_t|^2 + 2 \sum_{i,j=1}^N |z^i - z^j|^2\cr
&\leq \frac{2|\calE_\phi|\eta}{L_{\calG_\phi}} + 2 \sum_{i,j=1}^N  |z^i - z^j|^2 < \infty \qquad \mbox{for all $t\ge 0$\,.}
\end{aligned}$$
This completes the proof.
\end{proof}

%%%%%%%%%%%%%%%%%%%%%%%%%%%%%%%%%%%%%%%%%%%%%%%%%%
%
%
%
% \section{Stochastic pattern formation}
%
%
%%%%%%%%%%%%%%%%%%%%%%%%%%%%%%%%%%%%%%%%%%%%%%%%%%
\subsection{Stochastic pattern formation}

In this part, we provide the details on the exponential emergence of the stochastic pattern formation in Theorem \ref{main_thm2}. For this, we introduce an energy functional 
\begin{align*}
	\calJ_{\alpha,\beta}(\bfx,\bfv) := \alpha \sum_{(i,j)\in\calE_\phi}\bbE\,\Phi(|\bar x^{ij}|^2) + \sum_{i=1}^N \bbE\lal  \bar x^i- \bar x^{ave}, v^i \ral + \beta \bbE\|\bfv\|^2,
\end{align*}
where $\alpha, \beta >0$ will be determined appropriately later. We will see in the following that for appropriate choices of $\alpha$ and $\beta$, the functional $\calJ_{\alpha,\beta}$ plays the role of a Lyapunov function for our stochastic system \eqref{main_eq}.

 We also recall the energy functional
\[
	\calH(\bfx,\bfv) = \bbE[\|\bar\bfx- \bar \bfx^{ave}\|^2 + \|\bfv\|^2]\,,
\]
defined in Section~\ref{sec:pre}. We remind the reader that, without loss of generality, $v^{ave}$ is assumed to be zero.

\medskip

We first show a good relationship between the functional $\calJ_{\alpha,\beta}$ and $\calH$.

\begin{lemma}\label{lem_J} Assume $\phi^{min} > 0$. For any $\alpha,\beta>0$ satisfying
\begin{equation}\label{alpha-beta}
	\alpha\beta > \frac{1}{4N \phi^{min}L_{\calG_\phi}}\,,
\end{equation}
there exist constants $c_0, c_1>0$ such that
\bq\label{eqv_HJ}
c_0\calH(\bfx,\bfv)\leq \calJ_{\alpha,\beta}(\bfx,\bfv) \leq c_1\calH(\bfx,\bfv)\,.
\eq
\end{lemma}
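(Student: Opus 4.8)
The plan is to reduce both inequalities in \eqref{eqv_HJ} to estimates on the three constituent terms of $\calJ_{\alpha,\beta}$ expressed through the two nonnegative quantities $A:=\bbE\|\bar\bfx-\bar\bfx^{ave}\|^2$ and $B:=\bbE\|\bfv\|^2$, so that $\calH(\bfx,\bfv)=A+B$. First I would handle the potential term $\alpha\sum_{(i,j)\in\calE_\phi}\bbE\,\Phi(|\bar x^{ij}|^2)$. Since $\Phi(r)=\int_0^r\phi(s)\,\d s$, assumption (\textsf{A}) together with $\phi^{min}>0$ gives $\phi^{min}r\le\Phi(r)\le\phi^{max}r$, so it suffices to control $\bbE\sum_{(i,j)\in\calE_\phi}|\bar x^{ij}|^2$ from above and below by a multiple of $A$. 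For this I would combine the elementary identity $\sum_{i,j=1}^N|\bar x^i-\bar x^j|^2=2N\|\bar\bfx-\bar\bfx^{ave}\|^2$ with the connectivity estimate of Lemma~\ref{lem_conn} (applied to the graph $\calG_\phi$ and the points $\bar x^i$) and the trivial bound $\sum_{(i,j)\in\calE_\phi}|\bar x^{ij}|^2\le\sum_{i,j=1}^N|\bar x^{ij}|^2$. Taking expectations yields $2N L_{\calG_\phi}A\le\bbE\sum_{(i,j)\in\calE_\phi}|\bar x^{ij}|^2\le 2NA$, whence the potential term lies between $2N\alpha\phi^{min}L_{\calG_\phi}A$ and $2N\alpha\phi^{max}A$.

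Next I would estimate the cross term $C:=\sum_{i=1}^N\bbE\lal\bar x^i-\bar x^{ave},v^i\ral$. A pointwise Cauchy--Schwarz inequality in the index $i$ followed by Cauchy--Schwarz in expectation gives $|C|\le\sqrt A\,\sqrt B$. With this, the functional is squeezed between two quadratic expressions in $a=\sqrt A$ and $b=\sqrt B$. Using Young's inequality $\sqrt{AB}\le\tfrac{\theta}{2}A+\tfrac{1}{2\theta}B$ with a free parameter $\theta>0$, the upper bound is immediate (take $\theta=1$): one gets $\calJ_{\alpha,\beta}\le c_1\calH$ with $c_1=\max\{2N\alpha\phi^{max}+\tfrac12,\ \beta+\tfrac12\}$. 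This direction uses only $\phi^{max}<\infty$ from (\textsf{A}) and needs no lower bound on $\alpha\beta$.

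For the lower bound I would write $\calJ_{\alpha,\beta}\ge 2N\alpha\phi^{min}L_{\calG_\phi}\,a^2-ab+\beta\,b^2$ and recognise the right-hand side as a quadratic form in $(a,b)$ with matrix $\bigl(\begin{smallmatrix}2N\alpha\phi^{min}L_{\calG_\phi} & -\tfrac12\\ -\tfrac12 & \beta\end{smallmatrix}\bigr)$. This form is positive definite precisely when $4\cdot\bigl(2N\alpha\phi^{min}L_{\calG_\phi}\bigr)\cdot\beta>1$, and the standing hypothesis \eqref{alpha-beta} is more than enough to ensure it (indeed it gives $8N\alpha\beta\,\phi^{min}L_{\calG_\phi}>2>1$). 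Hence there is a constant $c_0>0$ with $2N\alpha\phi^{min}L_{\calG_\phi}\,a^2-ab+\beta b^2\ge c_0(a^2+b^2)$; concretely one may pick any $\theta$ with $\tfrac{1}{2\beta}<\theta<4N\alpha\phi^{min}L_{\calG_\phi}$, so that both coefficients $2N\alpha\phi^{min}L_{\calG_\phi}-\tfrac\theta2$ and $\beta-\tfrac{1}{2\theta}$ are strictly positive, and take $c_0$ to be their minimum. Substituting $a^2=A$, $b^2=B$ returns $c_0\calH\le\calJ_{\alpha,\beta}$.

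The only delicate point is the lower bound: the cross term $C$ carries no sign and could in principle dominate the two manifestly nonnegative contributions, so the entire content of the lemma is that the coupling constraint \eqref{alpha-beta} keeps the mixed term subordinate. Everything else — the two-sided bound on $\Phi$, the identity relating $\sum_{i,j}|\bar x^i-\bar x^j|^2$ to $\|\bar\bfx-\bar\bfx^{ave}\|^2$, and the two applications of Cauchy--Schwarz — is routine. I would also remark that the normalisation $v^{ave}=0$ permits replacing $\bar x^i-\bar x^{ave}$ by $\bar x^i$ inside $C$, though this simplification is not needed for the estimate.
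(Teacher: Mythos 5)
Your proof is correct and follows essentially the same route as the paper's: the two-sided bound $\phi^{min}r\le\Phi(r)\le\phi^{max}r$, Lemma~\ref{lem_conn} for $\calG_\phi$ combined with the identity $\sum_{i,j=1}^N|\bar x^{ij}|^2=2N\|\bar\bfx-\bar\bfx^{ave}\|^2$, and a Young-type estimate on the cross term. The only cosmetic difference is that the paper fixes the Young parameter to $1/\beta$, which consumes hypothesis \eqref{alpha-beta} exactly, whereas your free parameter $\theta$ (equivalently, the positive-definiteness of the $2\times 2$ quadratic form) makes visible the factor-of-two slack in that hypothesis.
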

\begin{proof} Let us first recall $\Phi(r)=\int_0^r \phi(s)\d s$. Since $\phi^{min} > 0$, we easily find $\Phi(|\bar x^{ij}|^2) \ge \phi^{min}|\bar x^{ij}|^2$, and this gives
\[
\bbE\sum_{(i,j)\in\calE_\phi}\Phi(|\bar x^{ij}|^2) \ge \phi^{min}L_{\calG_\phi}\bbE\sum_{i,j=1}^N|\bar x^{ij}|^2.
\]
On the other hand, we get by symmetry
\[
\sum_{i,j=1}^N|\bar x^{ij}|^2 = \sum_{i,j=1}^N | \bar x^i - \bar x^{ave} - (\bar x^j - \bar x^{ave})|^2 = 2N \sum_{i=1}^N | \bar x^i - \bar x^{ave}|^2,
\]
and thus
\[
\bbE\sum_{(i,j)\in\calE_\phi}\Phi(|\bar x^{ij}|^2) \ge 2N \phi^{min}L_{\calG_\phi} \bbE\|\bar\bfx - \bar \bfx^{ave}\|^2.
\]
We also use Young's inequality to estimate
\[
\lt| \sum_{i=1}^N \bbE\lal  \bar x^i- \bar x^{ave}, v^i \ral \rt| \leq \frac{1}{2\beta} \bbE\|\bar\bfx - \bar \bfx^{ave}\|^2 + \frac\beta2  \bbE \|\bfv\|^2.
\]
Combining all of the estimates above, we have
\[
\calJ_{\alpha,\beta}(\bfx,\bfv) \ge \lt(2N \phi^{min}L_{\calG_\phi} \alpha - \frac{1}{2\beta}\rt)\bbE\|\bar\bfx- \bar \bfx^{ave}\|^2 + \frac\beta2\bbE\|\bfv\|^2.
\]
Hence, for any $\alpha,\beta >0$ satisfying \eqref{alpha-beta}, we obtain
\[
\calJ_{\alpha,\beta}(\bfx,\bfv) \geq \frac{1}{2\beta}\min\Bigl\{4\alpha\beta N \phi^{min}L_{\calG_\phi} - 1, \ \beta^2 \Bigr\}\, \bbE[\|\bar\bfx - \bar \bfx^{ave}\|^2 + \|\bfv\|^2].
\]
For the upper bound estimate on $\calJ_{\alpha,\beta}$, we get
\[
\bbE\sum_{(i,j)\in\calE_\phi}\Phi(|\bar x^{ij}|^2) \leq \phi^{max}\bbE\sum_{i,j=1}^N|\bar x^{ij}|^2 = 2N \phi^{max}\bbE\|\bar\bfx - \bar \bfx^{ave}\|^2,
\]
hence 
\begin{align*}
\calJ_{\alpha,\beta}(\bfx,\bfv) &\leq \lt(2N \phi^{max}\alpha + \frac{1}{2}\rt)\bbE\|\bar\bfx - \bar \bfx^{ave}\|^2 + \lt(\beta + \frac12 \rt) \bbE\|\bfv\|^2\cr
&\leq \max\lt\{2N \phi^{max}\alpha + \frac{1}{2}, \ \beta+\frac12 \rt\}\bbE[\|\bar\bfx - \bar \bfx^{ave}\|^2 + \|\bfv\|^2].
\end{align*}
This completes the proof.
\end{proof}

We are now in a position to prove Theorem \ref{main_thm2}. The proof is based on {\it hypocoercivity}-type estimates.

\begin{proof}[Proof of Theorem \ref{main_thm2}] It follows from the proof of Lemma \ref{lem_energy}  that 
\[
\frac{\d}{\d t}\sum_{(i,j)\in\calE_\phi} \bbE\,\Phi(|\bar x^{ij}_t|^2) = 2\bbE\sum_{(i,j) \in \calE_\phi} \phi(|\bar x^{ij}_t|^2) \lal \bar x^j_t - \bar x^i_t, v^j_t - v^i_t \ral 
\]
and
\begin{align*}
\frac{\d}{\d t} \bbE \|\bfv_t\|^2  &\leq -2N (\psi^{min} L_{\calG_\psi} K - \sigma^2 c_B ) \bbE \|\bfv_t\|^2  - M \bbE\sum_{(i,j) \in \calE_\phi} \phi(|\bar x^{ij}_t|^2) \lal \bar x^j_t - \bar x^i_t, v^j_t - v^i_t \ral.
\end{align*}
Applying It\^o's formula, we also readily find
\begin{align*}
\frac{\d}{\d t} \sum_{i=1}^N \bbE\lal  \bar x^i_t - \bar x_t^{ave}, v^i_t \ral &= \bbE \|\bfv_t\|^2  + \frac K2 \bbE\sum_{(i,j) \in \calE_\psi} \psi(|x^{ij}_t|) \lal v^j_t - v^i_t, \bar x^i_t - \bar x^j_t  \ral - \frac M2 \bbE\sum_{(i,j) \in \calE_\phi} \phi(|\bar x^{ij}_t|^2) | \bar x^j_t - \bar x^i_t |^2\cr
&\leq \bbE \|\bfv_t\|^2 + \frac{K\psi^{max}}{4\gamma} \bbE \sum_{(i,j) \in \calE_\psi}|v^j_t - v^i_t|^2 + \frac{K\psi^{max}\gamma}{4} \bbE \sum_{(i,j) \in \calE_\psi} |\bar x^j_t - \bar x^i_t|^2 \cr
&\quad - \frac{M\phi^{min}}{2} \bbE\sum_{(i,j) \in \calE_\phi} | \bar x^j_t - \bar x^i_t |^2\cr
&\leq \lt(1 + \frac{KN\psi^{max}}{2\gamma} \rt)  \bbE \|\bfv_t\|^2 - N\lt( M \phi^{min}L_{\calG_\phi} - \frac{K\psi^{max}\gamma}{2}\rt) \bbE \|\bar\bfx_t - \bar \bfx^{ave}_t\|^2,
\end{align*}
where $\gamma > 0$ will be specified later. Combining all of the above estimates yields
\begin{align*}
\frac{d}{dt} \calJ_{\alpha,\beta}(\bfx_t,\bfv_t) &\leq -\lt(2\beta N (\psi^{min} L_{\calG_\psi} K - \sigma^2 c_B ) -  \lt(1 + \frac{KN\psi^{max}}{2\gamma} \rt) \rt) \bbE \|\bfv_t\|^2 \cr
&\quad - N\lt( M \phi^{min}L_{\calG_\phi} - \frac{K\psi^{max}\gamma}{2}\rt) \bbE \|\bar\bfx_t - \bar \bfx^{ave}_t\|^2\cr
&\quad + (2\alpha - \beta M) \bbE\sum_{(i,j) \in \calE_\phi} \phi(|\bar x^{ij}_t|^2) \lal \bar x^j_t - \bar x^i_t, v^j_t - v^i_t \ral.
\end{align*}
We then choose $\gamma = L_{\calG_\phi}M\phi^{min}/K\psi^{max}$ and $\alpha = \beta M/2$ to obtain
\begin{align*}
\frac{d}{dt} \calJ_{\alpha,\beta}(\bfx_t,\bfv_t) &\leq -\lt(2\beta N (\psi^{min} L_{\calG_\psi} K - \sigma^2 c_B ) -  \lt(1 + \frac{N (K \psi^{max})^2}{2L_{\calG_\phi}M\phi^{min}} \rt) \rt) \bbE \|\bfv_t\|^2 \cr
&\quad - \frac{NM \phi^{min}L_{\calG_\phi}}{2} \bbE \|\bar\bfx_t - \bar \bfx^{ave}_t\|^2.
\end{align*}
Finally choosing $\beta>0$ large enough such that 
\[
\beta > \max\lt\{\frac{2L_{\calG_\phi}M\phi^{min}+N (K \psi^{max})^2}{4NL_{\calG_\phi}M\phi^{min}(\psi^{min} L_{\calG_\psi} K - \sigma^2 c_B ) } , \frac{1}{\sqrt{2N M\phi^{min}L_{\calG_\phi}}}\rt\}
\]
while taking into account Lemma \ref{lem_J}, we have
\[ 
\frac{d}{dt} \calJ_{\alpha,\beta}(\bfx_t,\bfv_t) \leq -c_2\bbE[\|\bar\bfx_t - \bar \bfx^{ave}_t\|^2 + \|\bfv_t\|^2] \le -(c_2/c_1)\calJ_{\alpha,\beta}(\bfx_t,\bfv_t)
\]
for some $c_2 > 0$, where $c_0$ is appeared in Lemma \ref{lem_J}. By Gronwall's inequality we then easily deduce
\[
	\calJ_{\alpha,\beta}(\bfx_t,\bfv_t) \le \calJ_{\alpha,\beta}(\bfx_0,\bfv_0)\, e^{-(c_2/c_1)t}\,.
\]
The previous inequality and Lemma \ref{lem_J} again yield
\[
\bbE[\|\bar\bfx_t - \bar \bfx^{ave}_t\|^2 + \|\bfv_t\|^2] \leq \frac{c_1}{c_0}\, \bbE[\|\bar\bfx_0 - \bar \bfx^{ave}_0\|^2 + \|\bfv_0\|^2]\, e^{- (c_2/c_1) t}\qquad\text{for all $t \geq 0$\,.}
\]
This concludes the proof. 
\end{proof}

%%%%%%%%%%%%%%%%%%%%%%%%%%%%%%%%%%%%%%%%%%%%%%%%%%
%
%
%
% \section{Numerical experiments}
%
%
%%%%%%%%%%%%%%%%%%%%%%%%%%%%%%%%%%%%%%%%%%%%%%%%%%
\section{Numerical experiments}\label{sec:num}

In this section, we present several numerical experiments regarding the dynamics of solutions to our main system \eqref{main_eq}, and compare the numerical results with the analytic stochastic flocking and pattern formation results found in Theorems~\ref{main_thm} and \ref{main_thm2}. We employed an improved Euler--Maruyama method proposed in \cite{R12} for the realization of the stochastic system \eqref{main_eq}.

Let us first introduce different network structures in Fig.\ \ref{fig1} that are mainly used in our numerical simulations. 
\begin{figure}[h]
	\centering
	\subfloat[$\calG_0$ for $N=30$]{
		\includegraphics[scale=1.3]{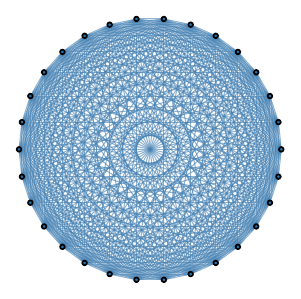}
		%\label{fig:net0}
		}
	\subfloat[$\calG_{1} $ for $N=30$]{
		\includegraphics[scale=1.3]{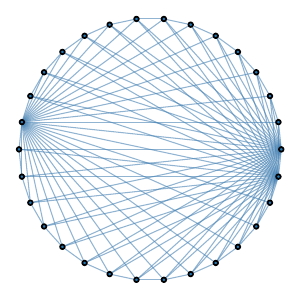}
		%\label{fig:net1}
		}
	\subfloat[$\calG_{2} $ for $N=30$]{
		\includegraphics[scale=1.3]{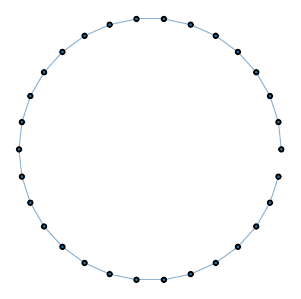}
		%\label{fig:net2}
		}
		
	\subfloat[$\calG_{3} $ for $N=30$]{
		\includegraphics[scale=1.3]{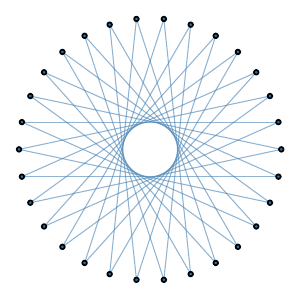}
		%\label{fig:net3}
		}
	\subfloat[$\calG_{3} $ for $N=40$]{
		\includegraphics[scale=1.3]{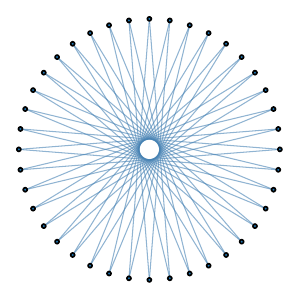}
		%\label{fig:net3-1}
		}
	\subfloat[$\calG_{3} $ for $N=150$]{
		\includegraphics[scale=0.4]{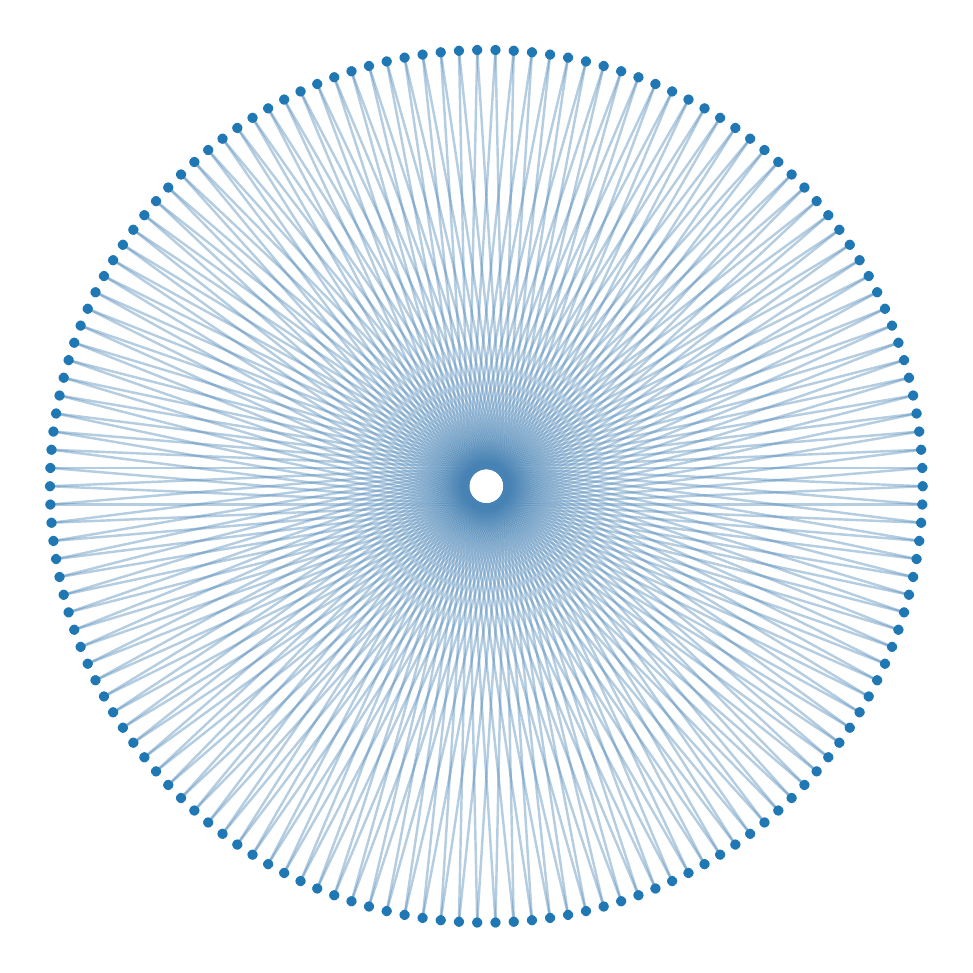}
		%\label{fig:net3-2}
		}
		
	\subfloat[$\calG_{4} $ for $N=30$]{
		\includegraphics[scale=1.3]{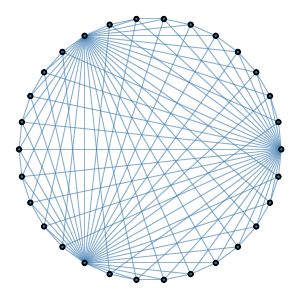}
		%\label{fig:net4}
		}
	\subfloat[$\calG_4$ for $N=50$]{
		\includegraphics[scale=1.3]{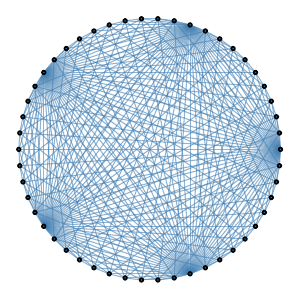}
		%\label{fig:net4-1}
		}
	\subfloat[$\calG_4$ for $N=150$]{
		\includegraphics[scale=1.3]{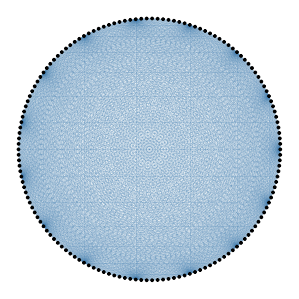}
		%\label{fig:net4-2}
		}
	\caption{Different network structures mainly used in simulations}
	\label{fig1}
\end{figure}
The graph $\calG_0$ in Fig.\ \ref{fig1} (a) is complete, i.e.\ every vertex is connected with every other vertex. In the graph $\calG_1$, there are only three vertices that are connected to all the other vertices and the rest are connected with its neighbors. Note that $\calG_0 = \calG_1$ if $N \leq 5$. $\calG_2$ is the graph where each vertex except the starting and terminal vertices is connected with its neighbors. This network topology is used in \cite{CKPP19} for the control signal $\bfu_t$. The edge set of $\calG_3$ consists of $(i,j)$ with $|i-j| \equiv n$ (mod $N$), where the positive integer $n$ is chosen as $2$, $N/2-1$, and $N/2-2$ when $N\not\equiv 0$ (mod 2), $N\equiv 0$ (mod 4), and $N\equiv 2$ (mod 4), respectively. Finally, $\calG_4$ is the graph where every vertex is connected with its neighbors and the vertex $i$ with $i \equiv 1$ (mod 10) is connected with every other vertex. We notice that all the graphs presented above are connected. Information on the graphs described above are summarized in Table \ref{table:net} below.

\begin{table}[h]
\begin{tabular}{|c|c|l|l|c|}
\hline
\multicolumn{1}{|C|}{\mathbfcal{G}} & \textbf{Fig.} & \multicolumn{1}{C|}{\mathbfcal{E}}                         & \multicolumn{1}{C|}{\boldsymbol{|\calE|}}                         & \multicolumn{1}{C|}{\boldsymbol{d(\calG)}} \\ \hline
	$\calG_0$ & (a) & $\{(i,j) :  i\neq j  \}$ & $N(N-1)$ & 1 \\  \hline 
	$\calG_1$ & (b) &  \begin{tabular}[c]{@{}L@{}} \{(i,j) : |i-j| = 1\} \\ \mbox{} \quad \cup \{(i,j) : i \mbox{ or } j=1,\lfloor N/2\rfloor, N\\ \mbox{} \hspace{1.1cm} \mbox{ with } i \neq j\} \end{tabular} &  \begin{tabular}[c]{@{}L@{}} 8N-22 \mbox{ when } N\geq 6, \\ N(N-1) \mbox{ when } N\leq 5 \end{tabular} & 2 \\ \hline
	$\calG_{2} $ & (c)  & $\{ (i,j) : |i-j|=1\}$ & $2(N-1)$ & $N-1$ \\ \hline
	$\calG_{3}$ &  (d),\,(e),\,(f) &  \begin{tabular}[c]{@{}L@{}} \{(i,j) : |i-j|\equiv n\mbox{ (mod N)} \} \mbox{ where} \\ n = \left\{ \begin{array}{ll}
 	2 & \text{if $N\not\equiv 0$ (mod 2)}\\
	 N/2-1 & \text{if $N\equiv 0$ (mod 4)}\\
	 N/2-2 & \text{if $N\equiv 2$ (mod 4)}
	  \end{array} \right.\end{tabular}  & $2N$ &  $\lfloor N/ 2 \rfloor$ \\ \hline
	$\calG_{4}$ & (g),\,(h),\,(i) & \begin{tabular}[c]{@{}L@{}} \{(i,j) : |i-j| = 1 \mbox{ or } N -1\}\\ \mbox{} \quad \cup \{(i,j) : i \mbox{ or } j \equiv 1 \mbox{ (mod 10)} \\ \mbox{} \hspace{1.1cm} \mbox{ with } i \neq j \} \end{tabular}
		&  \begin{tabular}[c]{@{}L@{}} 2N \lfloor (N+9) /10 \rfloor- \lfloor (N+9) /10 \rfloor ^2\\ \mbox{}\quad-5 \lfloor (N+9) /10 \rfloor+2N+2 \\ \mbox{} \hspace{1.1cm} \mbox{ when } N\equiv 1 \mbox{ (mod 10)}, \\ 2N \lfloor (N+9) /10 \rfloor- \lfloor (N+9) /10 \rfloor ^2\\ \mbox{}\quad -5 \lfloor (N+9) /10 \rfloor+2N \\  \mbox{} \hspace{1.1cm} \mbox{ when } N\not\equiv 1 \mbox{ (mod 10)}\end{tabular}  & 2 \\ \hline
\end{tabular}
\caption{A detailed information on the network structures}
\label{table:net}
\end{table}

\medskip

In the following two subsections, we consider two patterns, namely the Greek alphabet $\pi$ and Einstein's face image. These images are obtained from the search engine Wolfram Alpha\footnote{https://www.wolframalpha.com/}.

%%%%%%%%%%%%%%%%%%%%%%%%%%%%%%%%%%
%
%
% \subsection{$\pi $-like pattern} \label{pi-pattern}
%
%
%%%%%%%%%%%%%%%%%%%%%%%%%%%%%%%%%%

\subsection{Greek alphabet $\pi$ pattern} \label{pi-pattern}

Here, we consider a vector $\bfz$ in the control signal $\bfu_t$ that achieves the Greek alphabet $\pi$ shown in Fig.~\ref{fig_pi}(a). Throughout this subsection, we perform simulations for the system \eqref{main_eq} with $N=30$, $K=5$, $M=7$, and $\sigma=10^{-3}$. The communication weight functions are chosen as 
\begin{align}\label{num:psi-phi}
	\psi(r)=(1+r^2)^{-0.25}+0.3\,,\qquad \phi(r)=(1+r^2)^{-0.25}+0.1\,.
\end{align}
For the network structures, we consider $\calG_{\psi }=\calG_{3}$, $\calG_{\phi }=\calG_1$, and $\calG_{B}=\calG_0$, see Fig.~\ref{fig_pi}. 

\begin{figure}[h]
	\centering 
	\subfloat[graph of $\pi$ curve]{
		\includegraphics[scale=0.6]{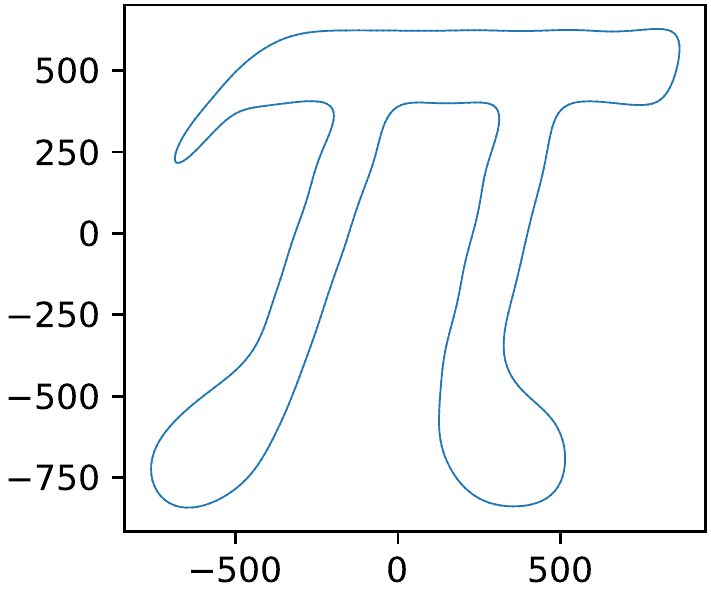}
		\label{fig:pi-curve}
	}
	\subfloat[$\calG_0$ in $\pi$ curve]{
		\includegraphics[scale=0.6]{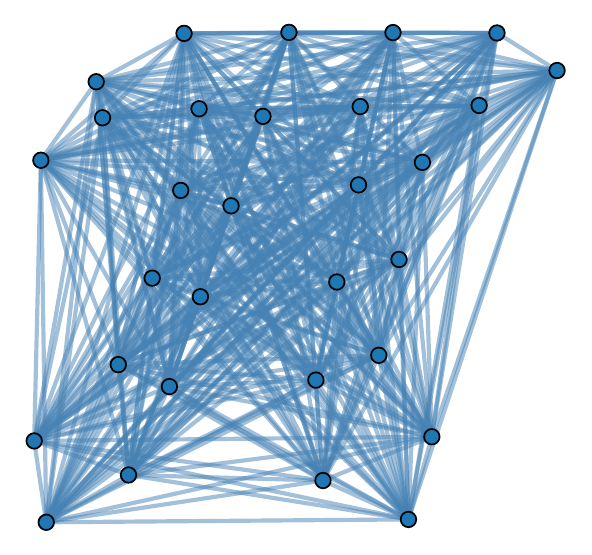}
		\label{fig:pi-net0}
	}
	\subfloat[$\calG_{1} $ in $\pi$ curve]{
		\includegraphics[scale=0.6]{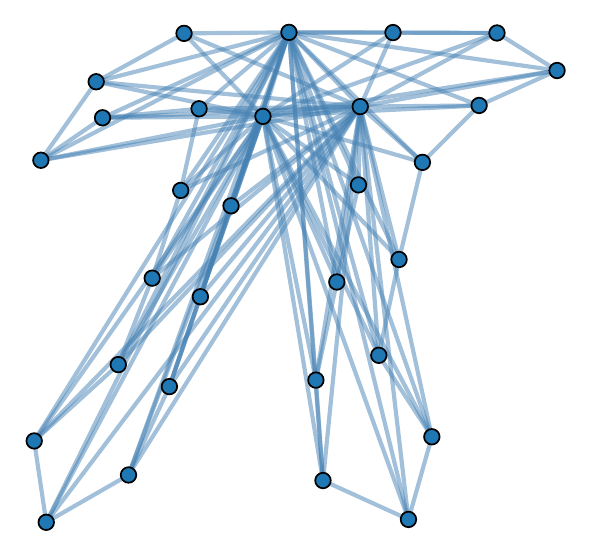}
		\label{fig:pi-net1}

	}
	\subfloat[$\calG_{3} $ in $\pi$ curve]{
		\includegraphics[scale=0.6]{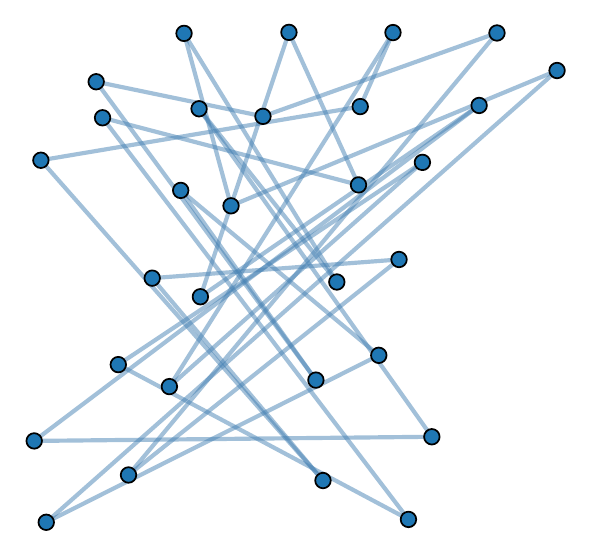}
		\label{fig:pi-net4}
	}

	\caption {
		Greek alphabet $\pi$ pattern and networks
	}
	\label{fig_pi}
\end{figure}

The initial positions of particles are randomly distributed in the box $[-212.125,212.125]^2$, and then modified to satisfy $\bbE\,x_{0}^{ave}= z^{ave}$. The initial velocity of particles are randomly distributed in the box $[-25,25]^2$ and similarly as above, it is modified to make the average $\bbE\,v_0^{ave}=0$. 

In our simulations, we find
\[
K\psi^{min}\geq 1.515\geq 2\sigma^2\frac{N}{L_{\calG_{3}}} \geq 2\sigma^2\frac{N}{L_{\calG_1}}\geq 2\sigma^2 \frac{N}{L_{\calG_0}}, \qquad \mbox{and} \qquad \int_{0}^{\infty}\phi(r)\,dr=\infty.
\] 
Thus our initial configurations for the simulations satisfy the assumptions in Theorems~\ref{main_thm}  and \ref{main_thm2}. We compute 100 realizations of the solution with the same initial data.  

 Fig.~\ref{fig_pi3} illustrates the $\pi$ pattern formation of solutions in the plane at different times. The simulations show that particles first form the desired pattern, and the their velocities are aligned later; small fluctuations in velocity are observed.

\begin{figure}[h]
	\centering
	\subfloat[t=0.000]{
		\includegraphics[scale=0.7]{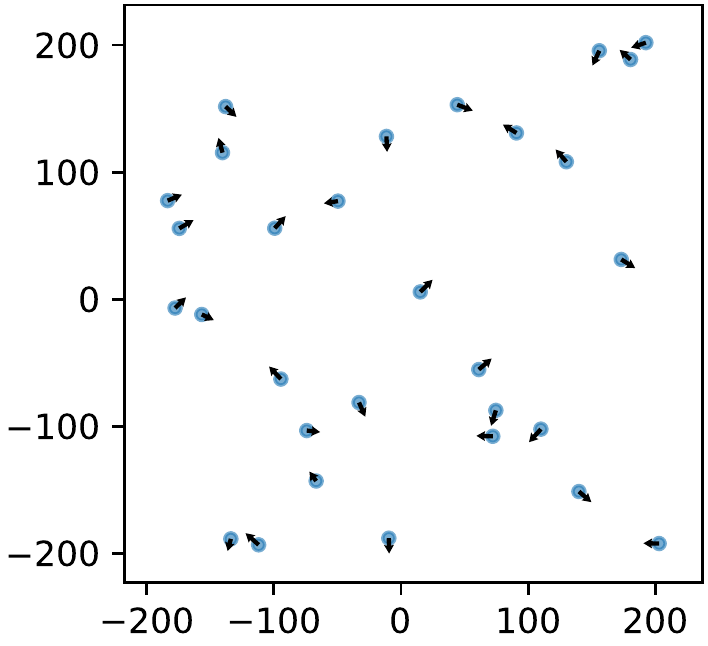}
		\label{fig:pi1}                    
	}                                          
	\subfloat[t=2.150]{                               
		\includegraphics[scale=0.7]{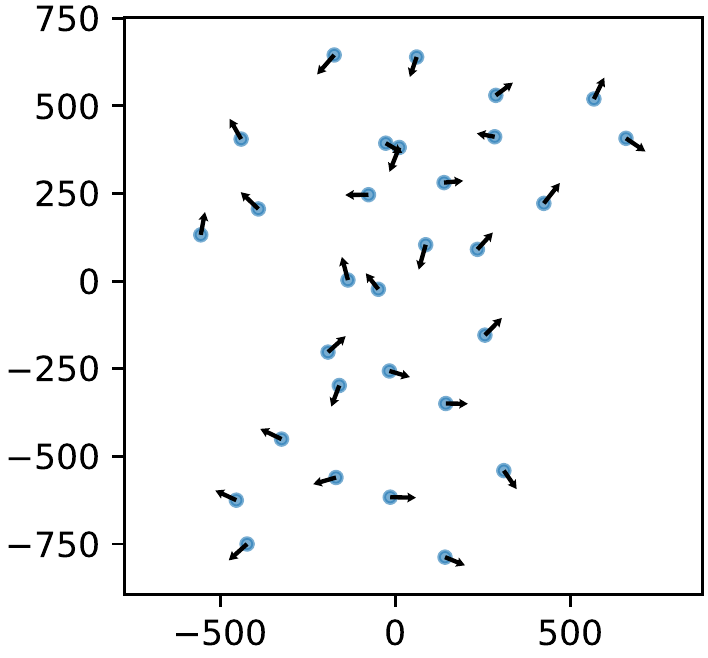}
		\label{fig:pi2}                    
	}                                                                               
	\subfloat[t=5.175]{                               
		\includegraphics[scale=0.7]{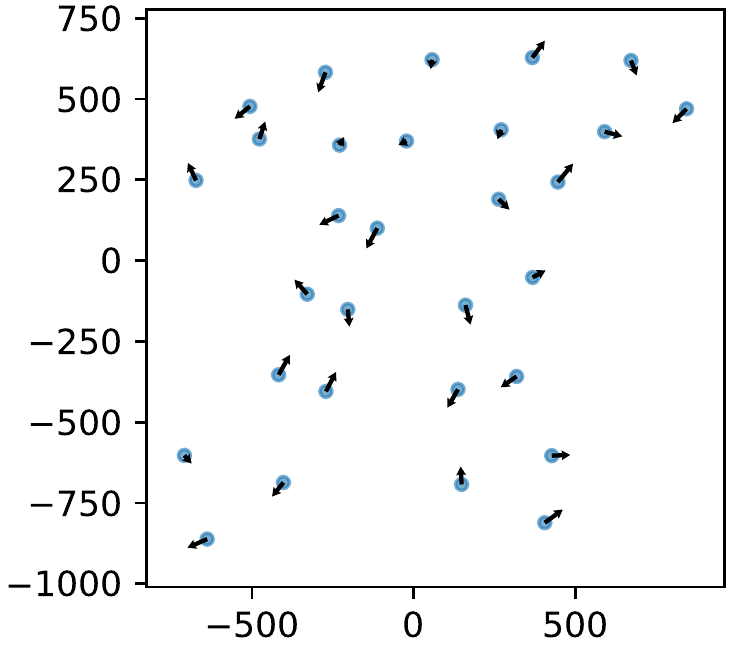}
		\label{fig:pi3}                    
	}           
	                               
	\subfloat[t=15.525]{                               
		\includegraphics[scale=0.7]{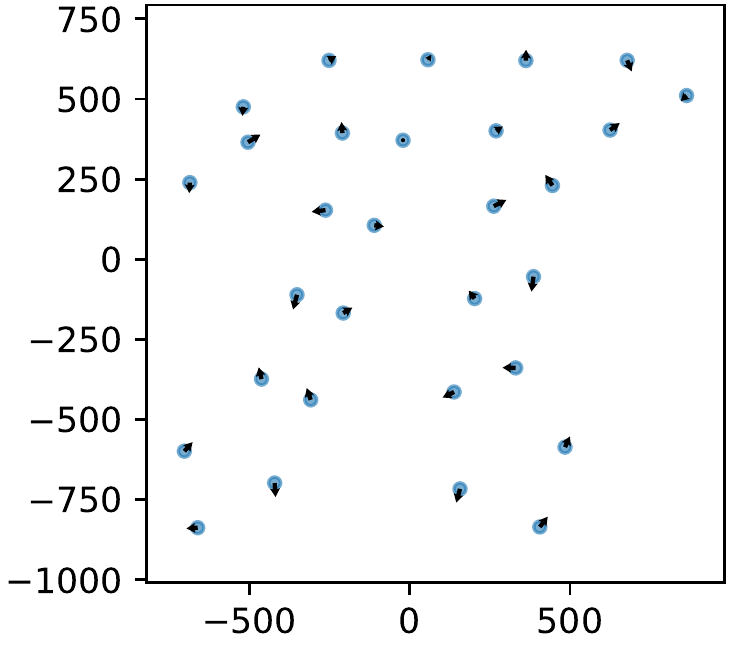}
		\label{fig:pi4}                    
	}                                                                                     
	\subfloat[t=24.450]{                               
		\includegraphics[scale=0.7]{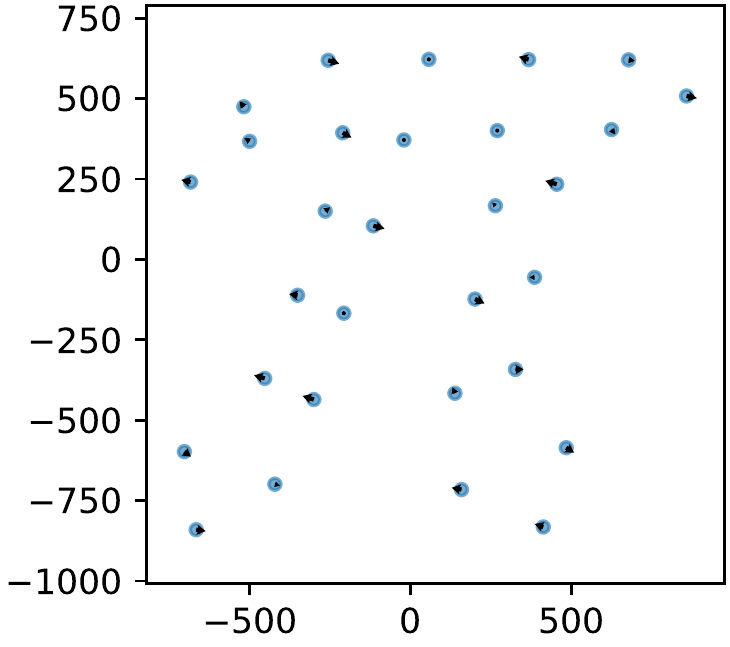}
		\label{fig:pi5}                    
	}                                          
	\subfloat[t=34.975]{                               
		\includegraphics[scale=0.7]{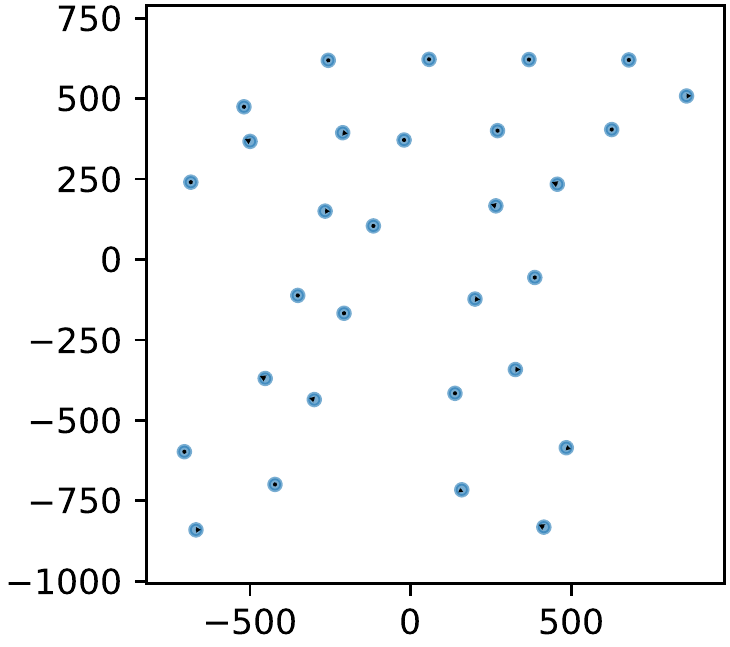}
		\label{fig:pi6}
	}

	\caption{
		Snapshots for $\pi$ pattern simulation}
		\label{fig_pi3}
\end{figure}

\begin{table}[ht]
	\begin{tabular}{|c|c|c|c|c|}
\hline
$\mathbfcal{G_\phi}/\mathbfcal{G_\phi}$ & $\calG_0/\calG_0$ & $\calG_0/\calG_1$ & $\calG_2/\calG_0$ & $\calG_2/\calG_1$ \\
\hline
$\mathbfcal{\beta}$ ($\simeq$) & $9.50655 \cdot 10^{3}$ & $4.18548 \cdot 10^5$ & $7.72684 \cdot 10^6$ & $3.40218 \cdot 10^8$ \\
\hline
\end{tabular}
\caption{$\beta$ values for pairs of networks considered in Fig.\ \ref{fig_pi_comp1} and \ref{fig_pi_comp2} below.}\label{table:beta}
\end{table}

In Fig.~\ref{fig_pi_comp1}, we show the time evolutions of energy functionals $\calH$ and $\calJ_{\alpha,\beta}$ for different networks. To be precise, we fix the network structure for $\calG_\psi = \calG_B = \calG_0$ and vary the network structures for $\calG_\phi  = \calG_i, i=0,\ldots,4$. Fig.~\ref{fig_pi_comp1} (A) and (B) illustrate the comparison of the time evolutions of $\calH$ and $\calJ_{\alpha,\beta}$, respectively, with different networks $\calG_\phi$. In this scenario, we find that both energies decay monotonically. We also observe that control networks $\calG_\phi$ with diameter $d(\calG_\phi)=O(1)$ provide faster decay than ones with $d(\calG_\phi)=O(N)$. Fig.~\ref{fig_pi_comp1} (C) and (D) show the equivalence relation between the energy functionals $\calJ_{\alpha,\beta}$ and $\calH$, which is as expected from the analytical result obtained in Lemma \ref{lem_J} (cf.\  \eqref{eqv_HJ}). The $\beta$-value used for Fig.~\ref{fig_pi_comp1} (C) and (D), i.e.\ for the cases $\calG_\phi=\calG_0$ and $\calG_\phi=\calG_1$, are given in Table~\ref{table:beta}, from which the rest of the parameters $\alpha$, $c_i$, $i=0,1,2$, $p$ and $q$ are determined based on the value of $\beta$.

In Fig.~\ref{fig_pi_comp2}, we consider a sparse network for the interacting particle system, i.e.\ we take $\calG_\psi=\calG_B=\calG_2$. Fig.~\ref{fig_pi_comp2} (A) and (B) show the comparison of the time evolutions of $\calH$ and $\calJ_{\alpha,\beta}$, respectively, with different networks $\calG_\phi=\calG_i$, $i=0,\ldots,4$. Compared to the fully connected case $\calG_\psi = \calG_B = \calG_0$, we observe the formation of oscillations in the behavior of the energy functional $\calH$, which is not present for the energy functional $\calJ_{\alpha,\beta}$. Indeed, as expected from the proof of Theorem \ref{main_thm2}, Fig.~\ref{fig_pi_comp2} (B) illustrates the monotonic decrease of the energy functional $\calJ_{\alpha,\beta}$. Contrary to the fully connected case for $\calG_\psi$, it is unclear from the simulations that control networks $\calG_\phi$ with smaller diameter $d(\calG_\phi)$ provide a faster decay. Surprisingly, the control network structure $\calG_\phi=\calG_3$ proves to have a better decay behavior compared to the fully connected case $\calG_\phi=\calG_0$. This shows that the asymptotic behavior of solutions is strongly influenced by the initial configurations in case of a sparse network for the interacting particles. Nevertheless, Fig.~\ref{fig_pi_comp2} (C) and (D) show that the inequality \eqref{eqv_HJ} obtained in Lemma \ref{lem_J} is upheld. The $\beta$-value for these cases are found in Table~\ref{table:beta}.

\begin{figure}[h]
	\centering
	\subfloat[]	{
		\includegraphics[scale=0.6]{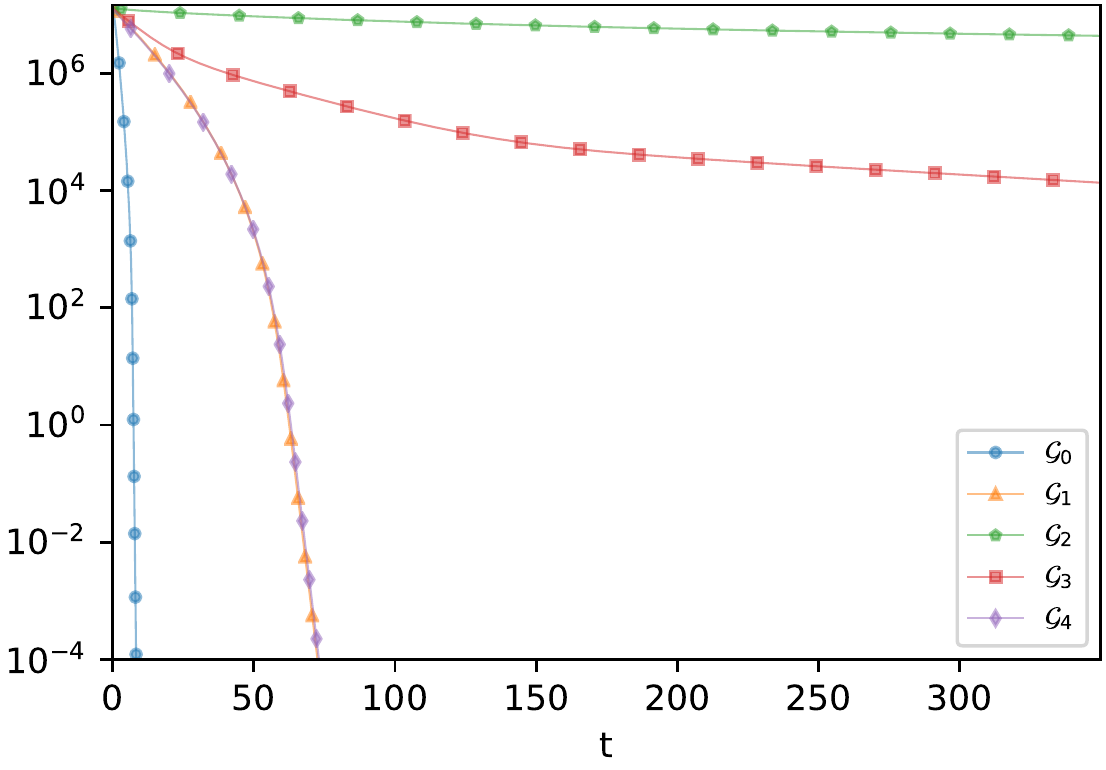}
		\label{fig:pi-comp-ps0-1}
	}
	\subfloat[]	{
		\includegraphics[scale=0.6]{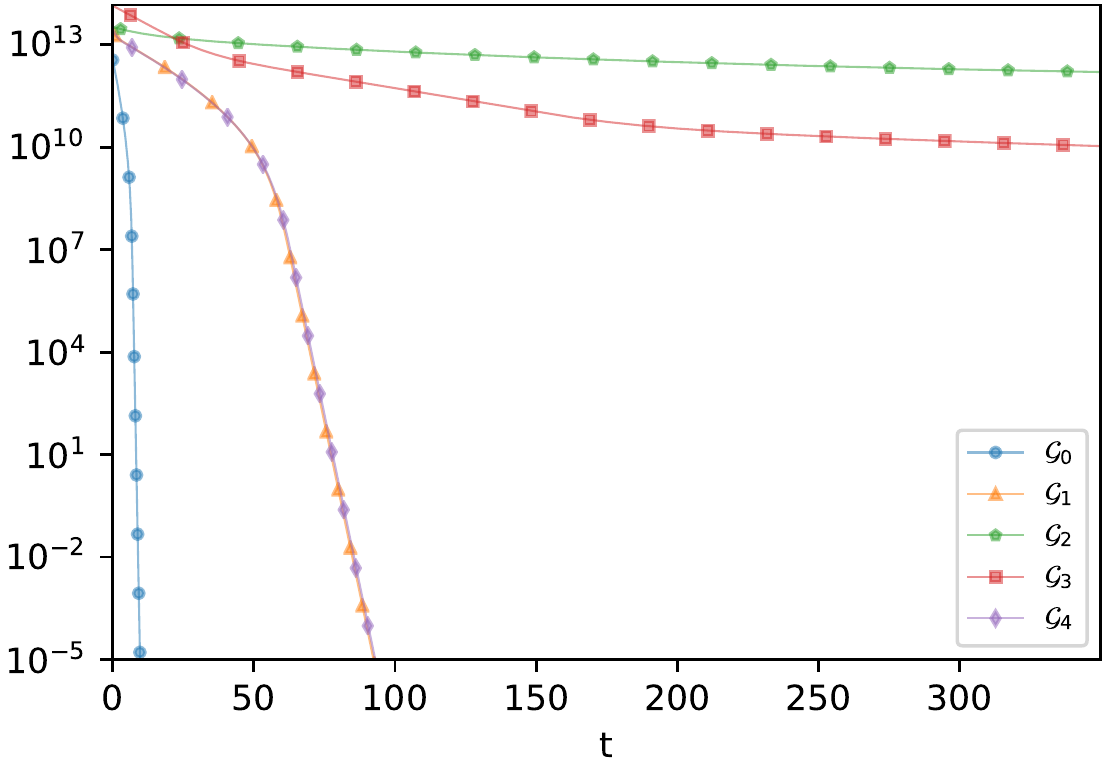}
		\label{fig:pi-comp-ps0-2}
	}

	\subfloat[]	{
		\includegraphics[scale=0.6]{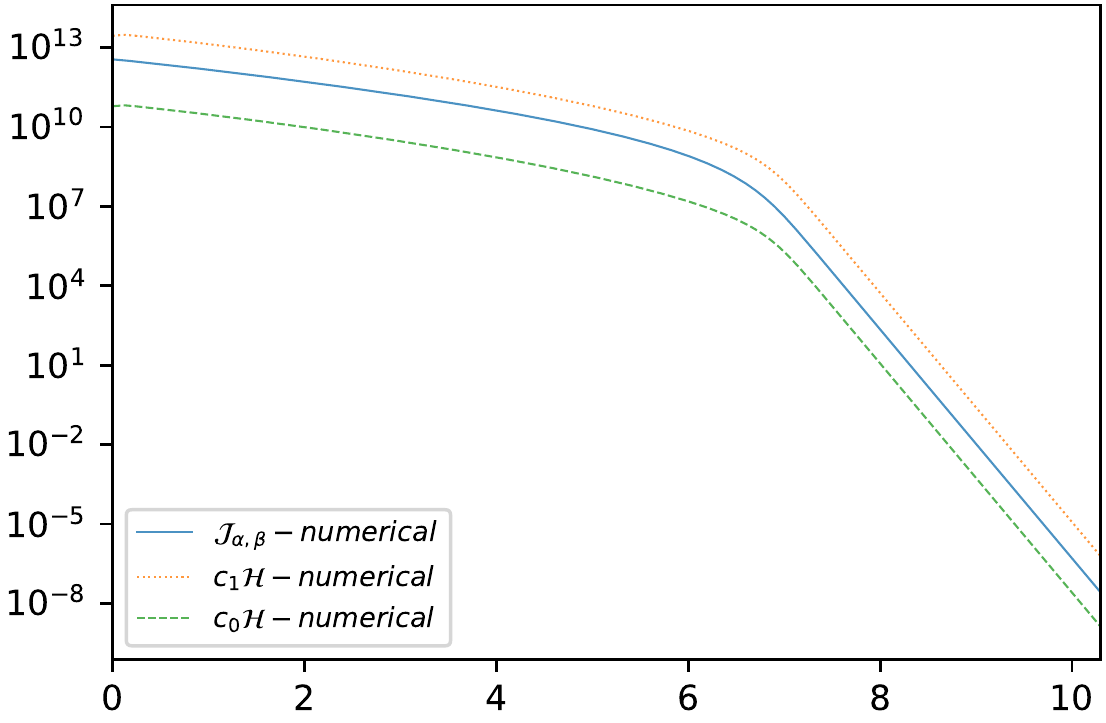}
		\label{fig:pi-comp-ps0-4}
	}
	\subfloat[]	{
		\includegraphics[scale=0.6]{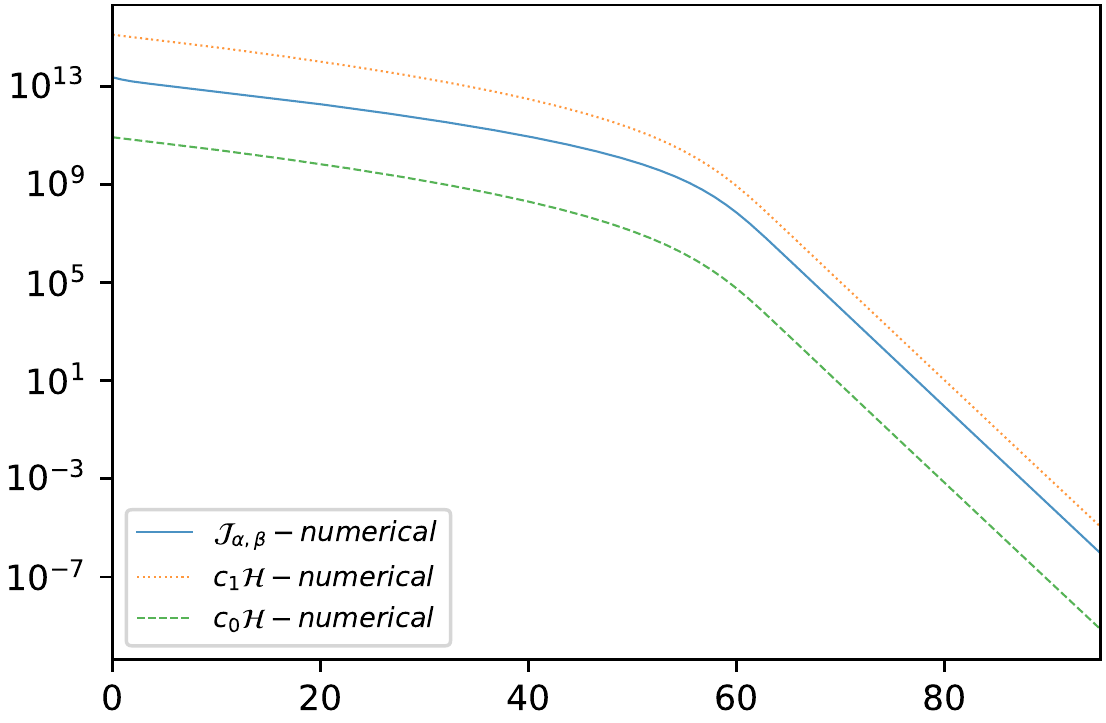}
		\label{fig:pi-comp-ps0-6}
	}
	\caption {Time evolutions of energy functionals $\calH$ and $\calJ_{\alpha,\beta}$ with $\calG_{\psi }=\calG_{B}=\calG_0  $ and $\calG_\phi  = \calG_i, i=0,1,\dots,4$. (A) Time evolution of $\calH$. (B) Time evolution of $\calJ_{\alpha,\beta}$. 
	(C) Comparison among $c_0 \calH$, $\calJ_{\alpha,\beta} $, and  $c_1 \calH$ when $\calG_{\phi }=\calG_{0} $. 
	(D) Comparison among $c_0 \calH$, $\calJ_{\alpha,\beta} $, and  $c_1 \calH$ when $\calG_{\phi }=\calG_1$. }  
	\label{fig_pi_comp1}
\end{figure}

\
\begin{figure}[h]
	\centering
	\subfloat[]	{
		\includegraphics[scale=0.6]{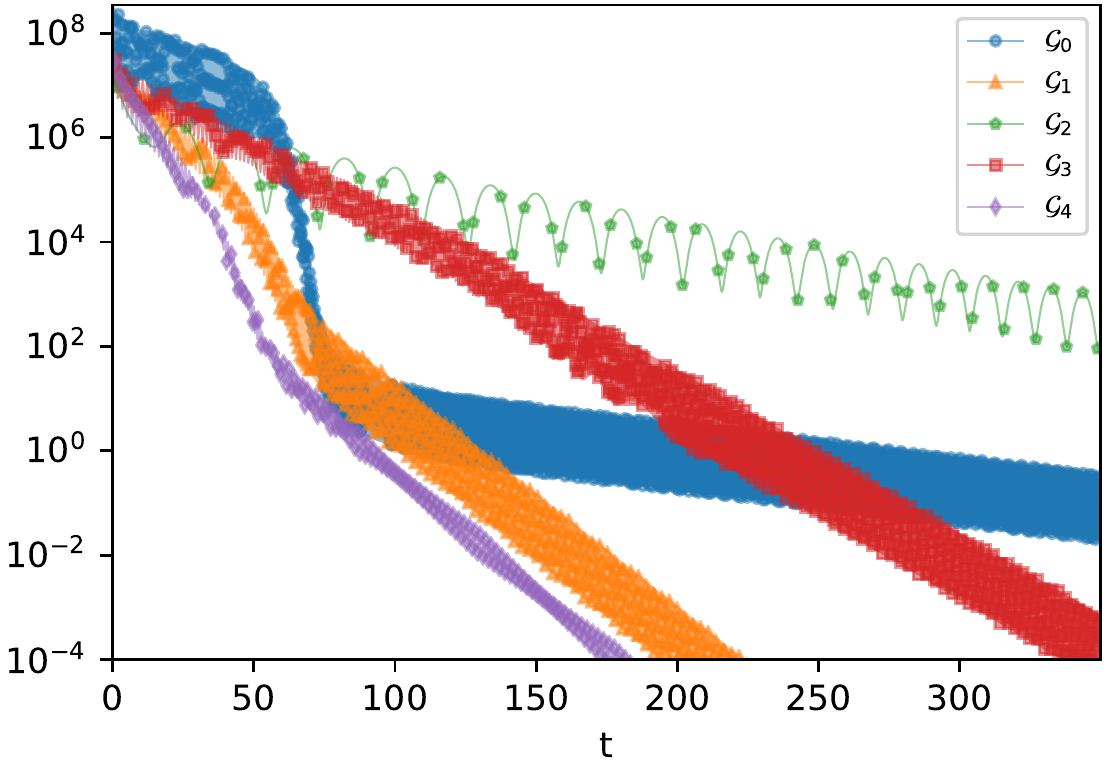}
		\label{fig:pi-comp-ps2-1}
	}
	\subfloat[]	{
		\includegraphics[scale=0.6]{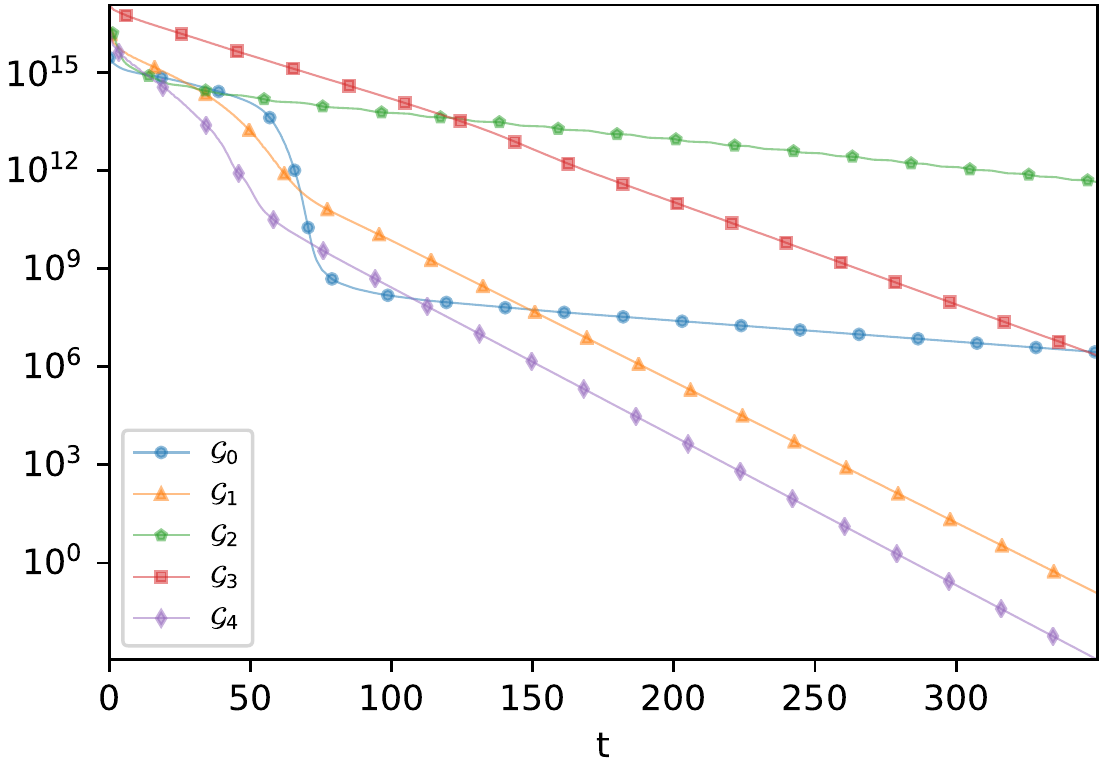}
		\label{fig:pi-comp-ps2-2}
	}

	\subfloat[]	{
		\includegraphics[scale=0.6]{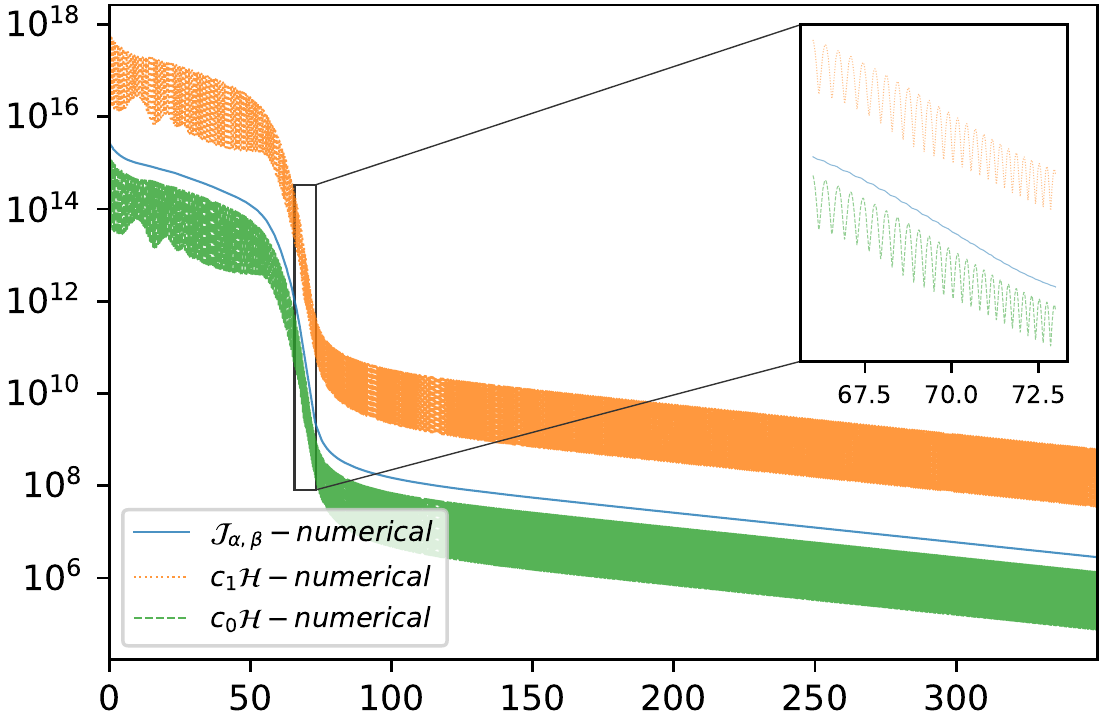}
		\label{fig:pi-comp-ps2-4}
	} 
	\subfloat[]	{
		\includegraphics[scale=0.6]{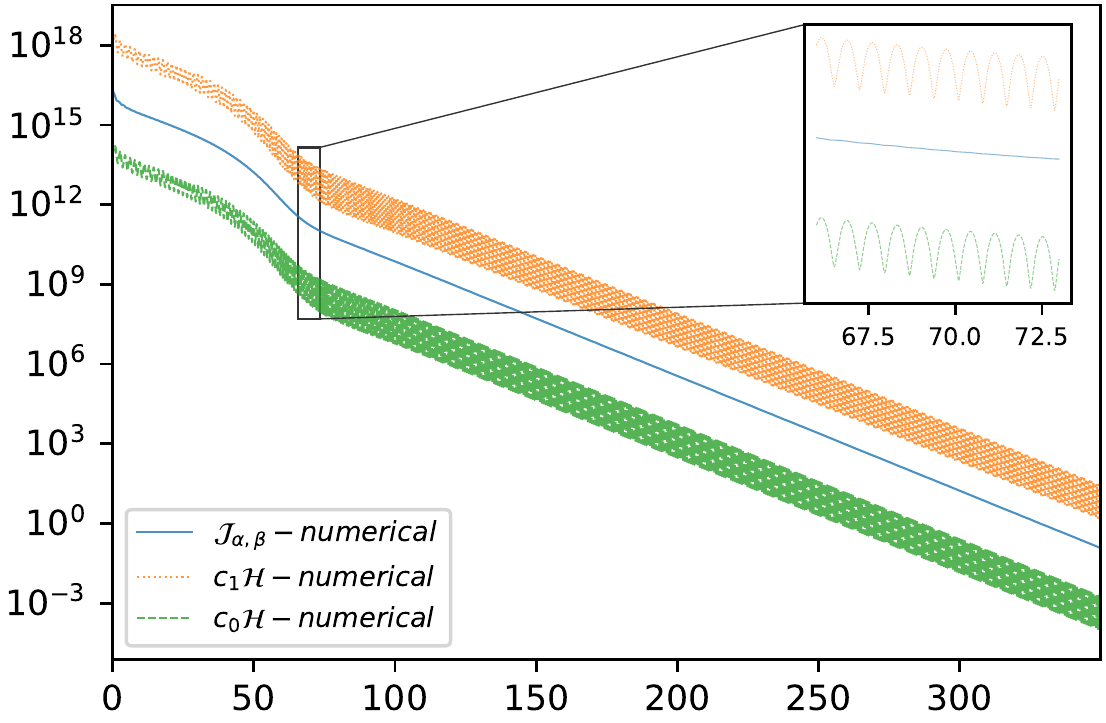}
		\label{fig:pi-comp-ps2-6}
	}
	\caption {Time evolutions of energy functionals $\calH$ and  $\calJ_{\alpha,\beta}$ with $\calG_{\psi }=\calG_{B}=\calG_2$ and $\calG_\phi  = \calG_i, i=0,1,\dots,4$. (A) Time evolution of $\calH$. (B) Time evolution of $\calJ_{\alpha,\beta}$. 
	(C) Comparison among $c_0 \calH$, $\calJ_{\alpha,\beta} $, and  $c_1 \calH$ when $\calG_{\phi }=\calG_{0}  $. 
	(D) Comparison among $c_0 \calH$, $\calJ_{\alpha,\beta} $, and  $c_1 \calH$ when $\calG_{\phi }=\calG_1$. 
} 
	\label{fig_pi_comp2}
\end{figure}

%%%%%%%%%%%%%%%%%%%%%%%%%%%%%%%%%%
%
%
% \subsection{$\pi $-like pattern} \label{pi-pattern}
%
%
%%%%%%%%%%%%%%%%%%%%%%%%%%%%%%%%%%

\subsection{Einstein's face image pattern}

In this part, we illustrate a pattern formation in the plane for Einstein's face image with $N=500$ particles. The parameters are chosen as $K=0.5$, $M=7$, and $\sigma=10^{-5}$, and the weight functions are selected as in \eqref{num:psi-phi} above. The interaction network structures are given as $\calG_{\phi}=\calG_{4}$, $\calG_{\psi }=\calG_4$, and $\calG_{B}=\calG_0$, and the vector $\bfz$ is chosen to have the desired image pattern, see Fig.~\ref{fig_ein0}. In this case, we easily verify the assumptions in Theorem \ref{main_thm}. 

The initial positions $\bfx_0$ and velocities $\bfv_0$ of particles are randomly distributed in $[-238.25,238.25]^2$ and $[-25,25]^2$, respectively, and, similarly as before, are modified to satisfy the conditions $\mathbb{E} x_0^{ave} = z^{ave}$ and $\mathbb{E}v_0^{ave} = 0$. This implies that the conditions in Theorem \ref{main_thm2} for the pattern formation are verified. 
\begin{figure}[h]
	\centering 
	\subfloat[Image of Einstein's face]{
		\includegraphics[scale=0.5]{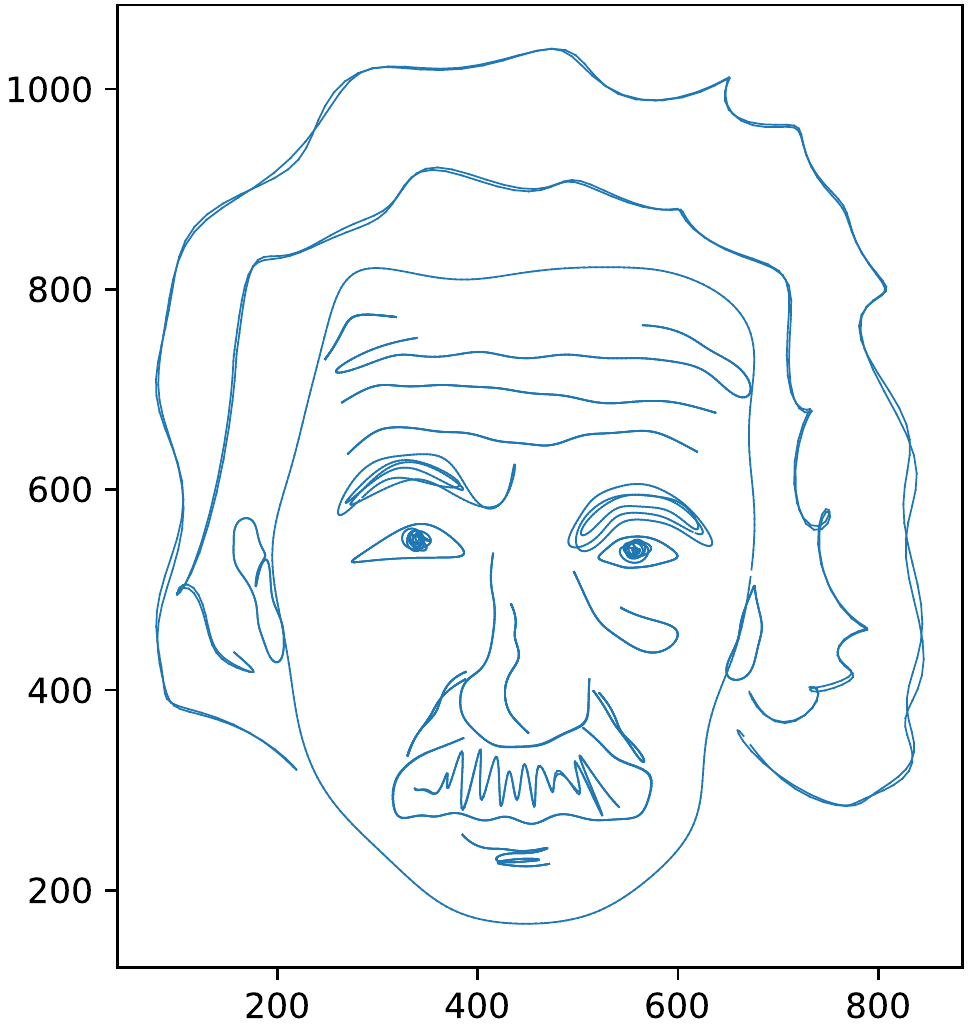}
		\label{fig:pi-curve}
	}
	\subfloat[$\calG_0$ in Einstein's face]{
		\includegraphics[scale=0.55]{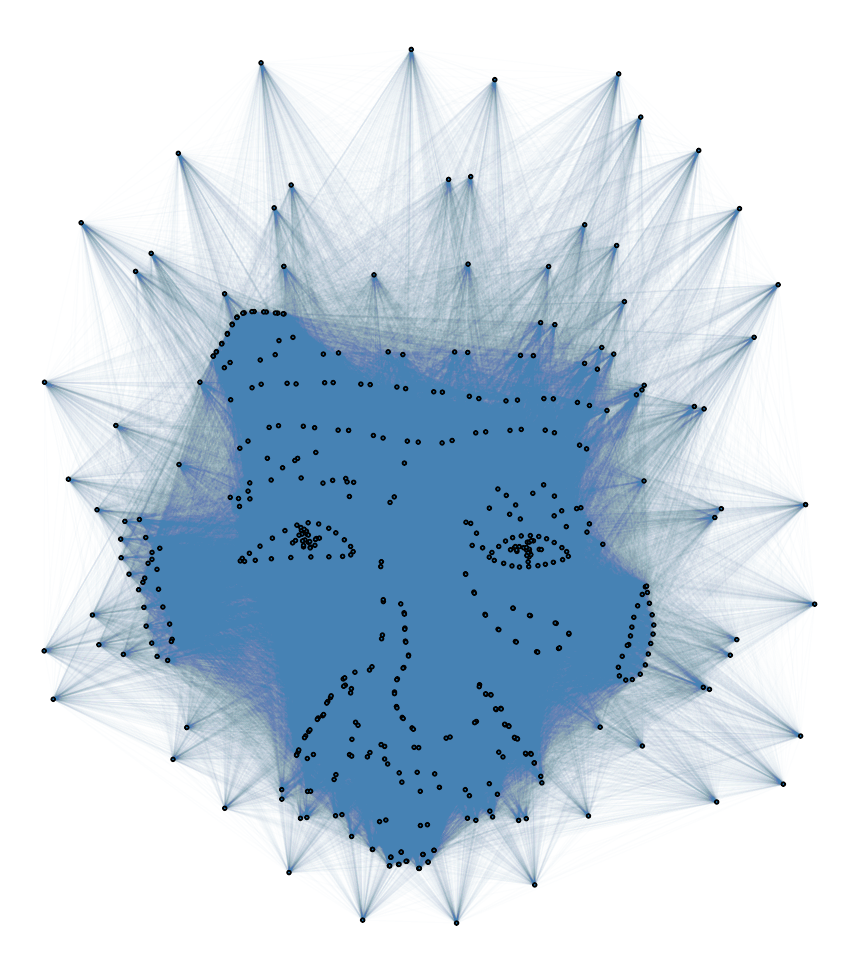}
		\label{fig:pi-net0}
	}
	\subfloat[$\calG_{4}$ in Einstein's face]{
		\includegraphics[scale=0.55]{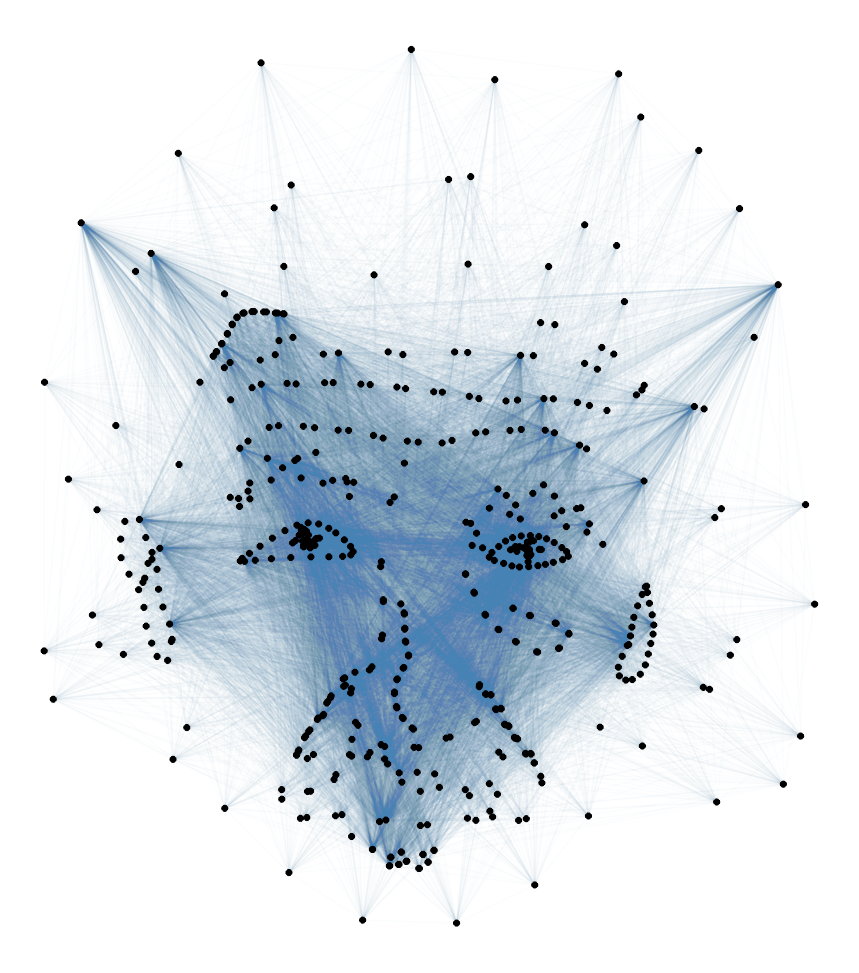}
		\label{fig:pi-net1}
}
	\caption {
		Greek alphabet $\pi$ pattern and networks
	}
	\label{fig_ein0}
\end{figure}
 
As in the previous subsection, we perform the numerical simulations for the time evolution of solutions, shown in Fig.~\ref{fig_ein2}, that forms the Einstein's face image in this setting.

\begin{figure}[h]
	\centering
	\subfloat[t=0.000]{
		\includegraphics[scale=0.7]{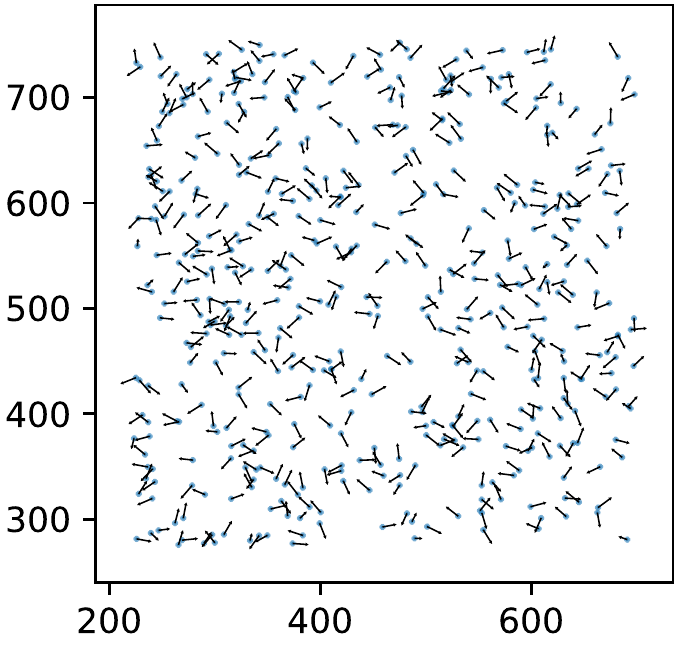}
		\label{fig:ein1}
	}
	\subfloat[t=0.125]{
		\includegraphics[scale=0.7]{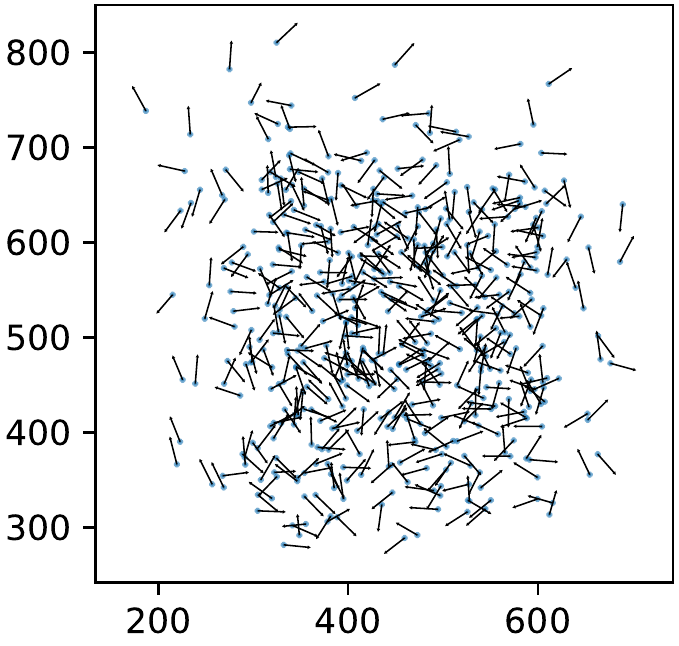}
		\label{fig:ein2}
	}
	\subfloat[t=0.513]{
		\includegraphics[scale=0.7]{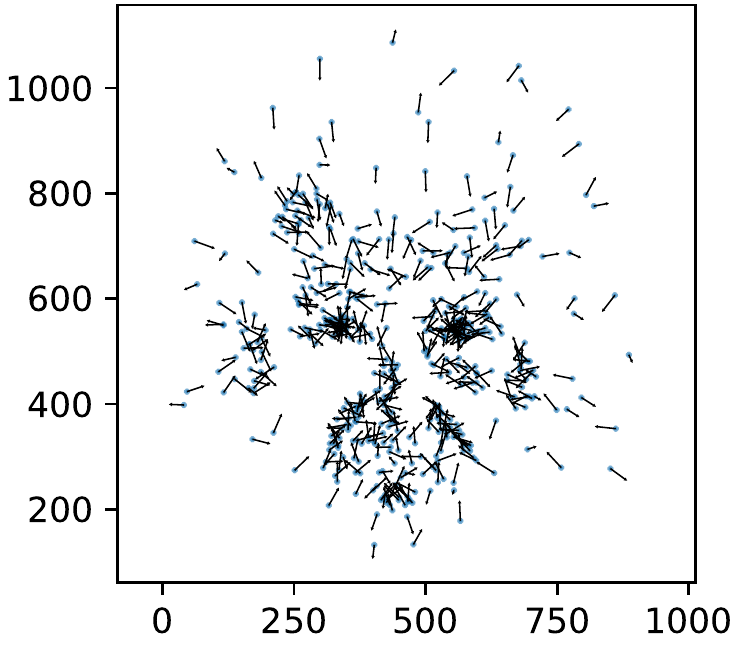}
		\label{fig:ein3}
	}
	
	\subfloat[t=0.913]{
		\includegraphics[scale=0.7]{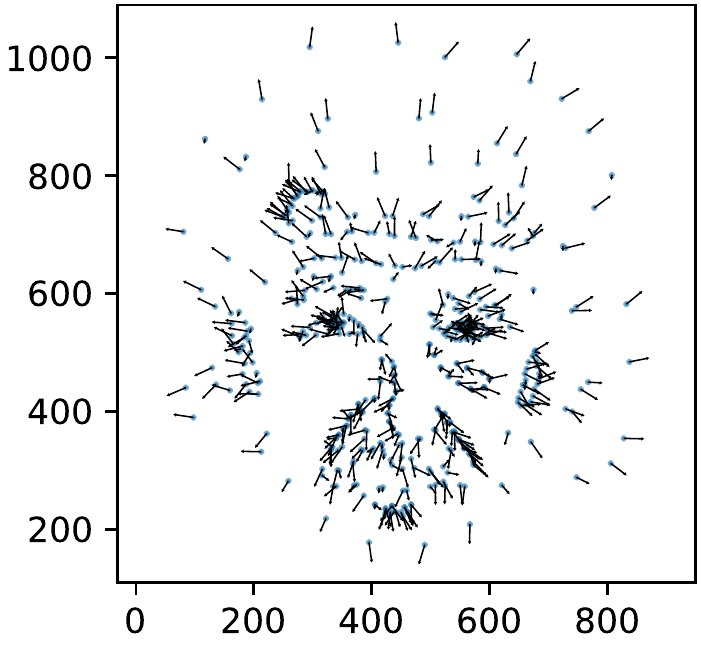}
		\label{fig:ein4}
	}
	\subfloat[t=1.550]{
		\includegraphics[scale=0.7]{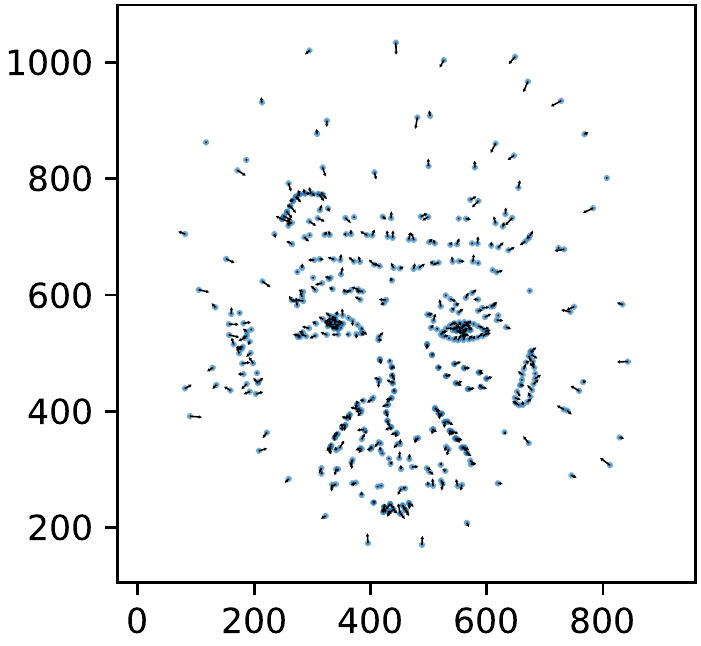}
		\label{fig:ein5}
	}
	\subfloat[t=2.238]{
		\includegraphics[scale=0.7]{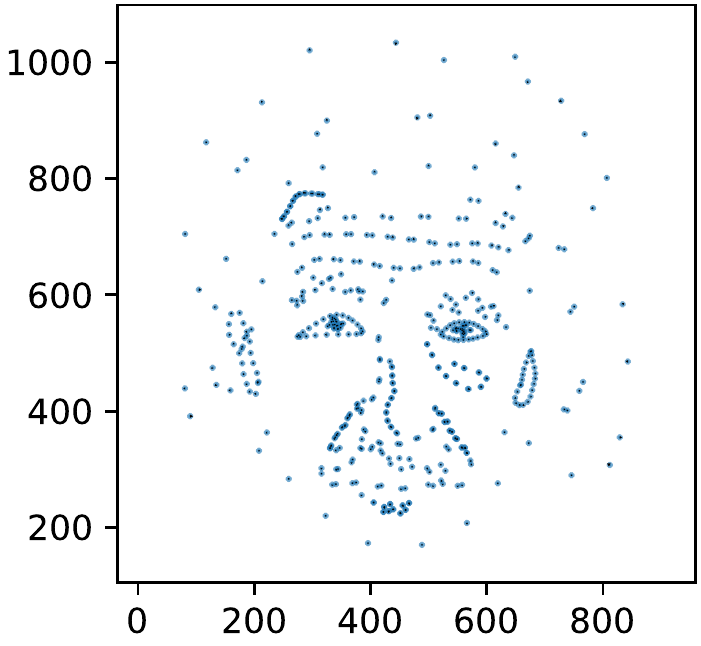}
		\label{fig:ein6}
	}

	\caption{
		Snapshots for an Einstein's face image pattern simulation 
	}
	\label{fig_ein2}
\end{figure}

%%%%%%%%%%%%%%%%%%%%%%%%%%%%%%%%%%%%%%%%%%%%%%%%%%
%
%
%
% \section*{Acknowledgement}
%
%
%%%%%%%%%%%%%%%%%%%%%%%%%%%%%%%%%%%%%%%%%%%%%%%%%%

\section*{Acknowledgement}  
YPC acknowledges support from NRF grant (No. 2017R1C1B2012918), POSCO Science Fellowship of POSCO TJ Park Foundation, and Yonsei University Research Fund of 2019-22-0212 and 2020-22-0505. OT acknowledges support from NWO Vidi grant 016.Vidi.189.102, ``Dynamical-Variational Transport Costs and Application to Variational Evolutions".
% ----------------------------------------------------------------
\bibliographystyle{amsplain}
\bibliography{SCS_network}
% ----------------------------------------------------------------

\end{document}